\documentclass[11pt]{amsart}
\usepackage[left=2cm,right=2cm,top=2cm,bottom=2.5cm,a4paper]{geometry}   	 
\usepackage{amscd, amssymb, mathrsfs, mathabx, tikz-cd, amsthm, thmtools, stmaryrd}

\usepackage{hyperref}

\usepackage[all]{xy}

\numberwithin{equation}{section}




\newcommand\bC{\bold{C}}
\newcommand\bD{\bold{D}}
\newcommand\bE{\bold{E}}

\newcommand\bK{\bold{K}}

\newcommand\bP{\bold{P}}
\newcommand\bQ{\bold{Q}}


\newcommand{\CC}{\mathbb{C}}

\newcommand{\EE}{\mathbb{E}}

\newcommand{\LL}{\mathbb{L}}

\newcommand{\PP}{\mathbb{P}}



\newcommand{\cal}{\mathcal}

\newcommand\cA{{\cal A}}
\newcommand\cB{{\cal B}}
\newcommand\cC{{\cal C}}
\newcommand\cD{{\cal D}}

\newcommand\cH{{\cal H}}

\newcommand\cL{{\cal L}}

\newcommand\cO{{\cal O}}



\def\fC{\mathfrak{C}}

\def\fM{\mathfrak{M}}



\DeclareMathOperator{\rank}{rank}

\def\surra{\twoheadrightarrow}

\def\hra{\hookrightarrow}

\def\lra{\longrightarrow }
\def\bar{\overline}

\def\til{\tilde}
\def\wtil{\widetilde}
\def\what{\widehat}

\def\ker{\mathrm{ker}}

\def\deg{\mathrm{deg}}
\def\dim{\mathrm{dim}}

\def\rank{\mathrm{rank}}

\def\Spec{\mathrm{Spec}}

\def\c>{\succ}
\def\c<{\prec}

\def\l({\left(}
\def\r){\right)}

\def\Sym{\mathrm{Sym}}

\def\Bl{\mathrm{Bl}}

\newtheorem{Thm*}{Theorem}
\newtheorem{prop}{Proposition}[section]

\newtheorem{lemm}[prop]{Lemma}
\newtheorem{coro}[prop]{Corollary}
\newtheorem{rema}[prop]{Remark}
\newtheorem{exam}[prop]{Example}

\newtheorem{defi}[prop]{Definition}

\def\dual{^{\vee}}

\def\virt{^{\mathrm{vir}} }
\def\loc{{\mathrm{loc}} }

\def\Spec{\mathrm{Spec} }



\newcommand{\KL}{\mathrm{KL}}
\newcommand{\red}{\mathrm{red}}
\newcommand{\Def}{\mathrm{Def}}

\newcommand{\refi}{\mathrm{ref}}
\def\det{\mathrm{det}}

\title[Quantum Lefschetz property for genus 2 quasimap invariants]{Quantum Lefschetz property for genus two stable quasimap invariants} 

\author{Sanghyeon Lee}
\address{School of Mathematics, KIAS, 85 Hoegiro, Dongdaemun-gu, Seoul 02455, Korea}
\email{sanghyeon@kias.re.kr}

\author[Mu-Lin Li]{Mu-Lin Li}
\address{School of Mathematics, Hunan University, China} \email{mulin@hnu.edu.cn}

\author{Jeongseok Oh}
\address{Department of Mathematics, Imperial College, London SW7 2AZ, United Kingdom}
\email{j.oh@imperial.ac.uk}

\thanks{}

\date{}

\begin{document}

\begin{abstract}
By the {\em reduced component} in a moduli space of {\em stable quasimaps} to $n$-dimensional projective space $\PP^n$ we mean the closure of the locus in which the domain curves are smooth. As in the moduli space of stable maps, we prove the reduced component is smooth in {\em genus $2$, degree $\geq 3$}.

Then we prove the virtual fundamental cycle of the moduli space of stable quasimaps to a {\em complete intersection $X$ in $\PP^n$} of genus $2$, degree $\geq 3$ is explicitly expressed in terms of the fundamental cycle of the reduced component of $\PP^n$ and virtual cycles of lower genus $<2$ moduli spaces of $X$.
\end{abstract}

\maketitle

\section*{Introduction}
Computing Gromov--Witten invariants of the quintic $3$-fold $X$ has attracted interests of both mathematicians and physicists due to its importance in mirror symmetry, which mainly studies Calabi--Yau $3$-folds. 
One effective way to conquer this computation is to relate them with GW invariants of $\PP^4$ in which $X$ is embedded. Then we apply virtual localisation \cite{GP99} for the natural torus action on $\PP^4$ to compute them.
We will call this principle relating GW invariants of $X$ and $\PP^4$ the {\em quantum Lefschetz property}.

The name, quantum Lefschetz, is originally from the formula between genus $0$ virtual cycles: Let $\iota: M(X)\hookrightarrow M(\PP^4)$ be the moduli spaces of stable maps to $X\hookrightarrow \PP^4$, respectively. On $M(\PP^4)$ a coherent sheaf $V:=\pi_*\mathsf{f}^*\cO_{\PP^4}(5)$ is defined via the universal curve $\pi:C\to M(\PP^4)$ and the universal map $\mathsf{f}:C\to \PP^4$. In genus $0$, $M(\PP^4)$ is smooth and $V$ is a vector bundle. Then the quantum Lefschetz formula \cite{KKP03} asserts that
\begin{align}\label{naiveQLP1}
\iota_*[M(X)]\virt \ = \ e(V) \ \cap\ [M(\PP^4)].
\end{align}
Unfortunately, it turns out that \eqref{naiveQLP1} does not hold for higher genus invariants \cite{Gi98}. So we need more sophisticated version of the quantum Lefschetz property for higher genus invariants.

\smallskip
Meanwhile, the explicit relationship between GW and {\em stable quasimap invariants} of $X$ is known to be wall-crossing formula \cite{CK20, CJR21-1, Zh22}. Since we may expect a relatively simpler version of quantum Lefschetz property for higher genus quasimap invariants, wall-crossing formula allows us to study simpler quantum Lefschetz property to compute GW invariants. For instance the original quantum Lefschetz formula \eqref{naiveQLP1} holds true for genus $1$ quasimap invariants, so it dramatically helps the computation of genus $1$ GW invariants \cite{KL18}.

We notice that there has been several interesting quantum Lefschetz formulae for higher genus GW or quasimap invariants, or relationships between invariants of $X$ and other invariants, developed in a recent few years \cite{Zi1, Zi08, CZ14, CL15, KL18, CLLL16, FL19, BCM20, CM18, CJRS18, CGLL21, LO18, CGL21, CJR21-2, LO20}. These lead us some actual computations of higher genus invariants \cite{Zi2, Po13, KL18, GJR17, FL19, CGL18, GJR18}.
In our paper we would like to introduce one more quantum Lefschetz formula for genus $2$ quasimap invariants. Our formulae \eqref{qlp1}, \eqref{qlp} contain Zinger-type reduced virtual cycles, which have not been studied in any of references above for genus $\geq 2$ yet. Since it is expected to have some interesting properties -- such as integrability -- we hope our new formulae would suggest some idea in studying higher genus invariants. 

To construct Zinger-type reduced virtual cycles, we need to study the reduced components on which the cycles are supported (conjecturally), in the moduli spaces of stable maps or stable quasimaps to $\PP^n$. It is firstly addressed in \cite{VZ08,HL10} where they studied genus $1$ stable maps. Later \cite{HLN18, BC} studied genus $2$ stable maps in different ways -- \cite{HLN18} is closer to the original idea of \cite{VZ08,HL10}, whereas \cite{BC} uses curves with Gorenstein singularities. Although \cite{BC} studied more general target spaces, we follow the idea of \cite{HLN18} to construct our reduced virtual cycles due to its advantage on computations.

\smallskip
We consider a slight more general situation. Let $X = \{f_1 = \dots = f_m = 0\}$ be a complete intersection in projective space $\PP^n$, where $f_i \in \Gamma(\PP^n, \cO_{\PP^n}(\ell_i))$. When $n=4$, $m=1$ and $\ell_1=5$ it recovers a quintic threefold $X$. We denote by $Q_{g,k,d}(X)\hookrightarrow Q_{g,k,d}(\PP^n)$ the moduli spaces of stable quasimaps to $X\hookrightarrow \PP^n$ of genus $g$, degree $d$ with $k$ marked points. Using the universal curve and map
$$
\xymatrix@R=6mm{
C\ar[r]^-{\mathsf{f}} \ar[d]^-{\pi} & [\CC^{n+1}/\CC^*]\\
Q_{g,k,d}(\PP^n), & 
}
$$
we define $V_{g,k,d}:=\oplus_{i=1}^m\pi_*\mathsf{f}^*\cO(\ell_i)$, where $\cO(d):=[\CC^{n+1}\times\CC/\CC^*]$ is a bundle defined by weight $d$ representation. Let $Q^{\red}_{g,k,d}(\PP^n)$ be the closure of the open substack in $Q_{g,k,d}(\PP^n)$ on which $R^1\pi_*\mathsf{f}^*\cO(1)$ vanishes
$$
Q^{\red}_{g,k,d}(\PP^n)\ :=\ \mathrm{closure}\left(Q_{g,k,d}(\PP^n)\smallsetminus\mathrm{supp}R^1\pi_*\mathsf{f}^*\cO(1)\right)\ \subset\ Q_{g,k,d}(\PP^n).
$$ 
Then on the proper birational base change $\wtil{Q}_{g,k,d}(\PP^n)\to Q_{g,k,d}(\PP^n)$ in Section \ref{desin}, the proper transform of $Q^{\red}_{g,k,d}(\PP^n)$ is smooth and $V_{g,k,d}$ over there is a bundle. We denote by $\LL$ the tautological bundle associated to the marked point, a line bundle formed by the cotangent line at the marked point.

Then we prove the following quantum Lefschetz formula for a Calabi-Yau $3$-fold.

\begin{Thm*}\label{QLP1}
When $X$ is a Calabi-Yau $3$-fold, $d\geq 3$, we have an equivalence in the Chow group of $Q_{2,0,d}(X)$,
\begin{align}\label{qlp1}
[Q_{2,0,d}(X)]\virt = \ & e^{\refi}(V_{2,0,d})\cap [Q_{2,0,d}^{\red}(\PP^n)] \\ \nonumber
& - \frac{c_1(\LL)}{24}\cap [Q_{1,1,d}(X)]\virt \\ \nonumber
& + \frac{1}{24^2}\l(\frac{c_1(\LL_1)c_1(\LL_2)}{2}-\frac{3(\mathrm{ev}^*_1 c_2(T_X) + \mathrm{ev}^*_2 c_2(T_X))}{2} \r) \cap [Q_{0,2,d}(X)]\virt .
\end{align}
\end{Thm*}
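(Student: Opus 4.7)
My plan is to work on the proper birational modification $\wtil{Q}:=\wtil{Q}_{2,0,d}(\PP^n)\to Q_{2,0,d}(\PP^n)$ introduced in Section \ref{desin}, on which the strict transform $\wtil{Q}^{\red}$ of the reduced component is smooth and $V_{2,0,d}$ is a genuine vector bundle. Because $X$ is Calabi--Yau, the obstruction sheaf of $Q_{2,0,d}(X)$ carries a cosection and the cycle $[Q_{2,0,d}(X)]\virt$ is realised as a Kiem--Li-type cosection-localised Euler class of $V_{2,0,d}$ on $\wtil{Q}$. The idea is to decompose the pullback of $[\wtil{Q}]$ as $[\wtil{Q}^{\red}]$ plus boundary components parametrising domain curves with contracted subtrees of positive total genus, cap each piece with the (localised/refined) Euler class of $V_{2,0,d}$, and match the three resulting pieces with the three summands of \eqref{qlp1}.

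On the smooth stratum $\wtil{Q}^{\red}$ the cosection is surjective away from the exceptional boundary, so the localised Euler class restricts to $e^{\refi}(V_{2,0,d})\cap[\wtil{Q}^{\red}]$; pushing down along $\wtil{Q}\to Q_{2,0,d}(\PP^n)$ produces the first summand. The remaining contributions come from the boundary strata indexed by stable graphs with at least one contracted edge. In genus $2$ these decompose, according to the genus $g_0$ of the quasimap-carrying main vertex, into three types: (i) $g_0=1$ with a single contracted $\overline{M}_{1,1}$-tail; (ii) $g_0=0$ with two contracted $\overline{M}_{1,1}$-tails attached at the two marked points; and (iii) $g_0=0$ with a contracted genus-$2$ subtree realising the remaining genus. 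Each stratum is the image of a gluing morphism from a product $Q_{g_0,k,d}(X)\times \prod_i \overline{M}_{g_i,n_i}$.

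For each boundary stratum I would push the localised Euler class forward through the gluing morphism. The normal-bundle Euler class of the gluing node is $-\psi_{\text{main}}-\psi_{\text{tail}}$, while integration along the contracted $\overline{M}_{g_i,n_i}$ factors reduces to Hodge-type integrals which Mumford's GRR formula relates to $c_1(\LL)$ and $\mathrm{ev}^*c_\bullet(T_X)$. Using $\int_{\overline{M}_{1,1}}\psi=\tfrac{1}{24}$ one immediately recovers the factor $-\tfrac{c_1(\LL)}{24}$ in front of $[Q_{1,1,d}(X)]\virt$ from stratum (i) and the term $\tfrac{1}{24^2}\cdot\tfrac{c_1(\LL_1)c_1(\LL_2)}{2}$ (the $\tfrac12$ being the automorphism swapping the two identical tails) from stratum (ii). For stratum (iii) one expands $\mathrm{ch}\bigl(R\pi_*\mathsf{f}^*\cO(\ell_i)\bigr)$ restricted to the contracted tail via GRR in terms of Hodge classes $\lambda_1,\lambda_2$; the Calabi--Yau identity $\sum_i\ell_i=n+1$ kills the $\lambda_1$-linear contributions (which are proportional to $c_1(T_X)$), and the remaining $\lambda_2$- and $\lambda_1^2$-integrals on $\overline{M}_{2,1}$ collapse to the stated coefficient $-\tfrac{3}{2\cdot 24^2}\,\mathrm{ev}^*_i c_2(T_X)$, summed over $i=1,2$ for the two possible attachment points.

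\textbf{Main obstacle.} The delicate step is stratum (iii): one must verify that the refined Euler class $e^{\refi}$ interacts with the boundary of $\wtil{Q}^{\red}$ in exactly the way prescribed by cosection localisation (so that no spurious excess classes arise from the desingularisation $\wtil{Q}\to Q_{2,0,d}(\PP^n)$), and then extract the precise Hodge integral on $\overline{M}_{2,1}$ that yields the numerical coefficient $-\tfrac{3}{2}$ in front of $c_2(T_X)$. All other contributions are essentially forced by the Mumford relation $\int_{\overline{M}_{1,1}}\psi=\tfrac{1}{24}$ and the standard nodal normal-bundle calculus, so the whole proof hinges on this genus-$2$ Hodge integral computation together with the Calabi--Yau cancellation.
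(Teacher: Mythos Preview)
Your decomposition of the boundary strata is wrong, and this leads you to compute the $c_2(T_X)$ contribution in the wrong place. Your stratum (iii), a contracted genus-$2$ subtree attached to a rational quasimap component at one node, \emph{does not occur} in the quasimap moduli space: the rational component would have a single special point, and the quasimap stability condition $\omega_C^{\log}\otimes L^\varepsilon$ ample for all $\varepsilon>0$ rules out rational tails regardless of degree. The actual third stratum in the paper is $\overline{M}_{1,2}\times Q'_{0,2,d}(X)$, the locus where a genus-$1$ curve with two marked points is attached to a genus-$0$ quasimap whose two evaluation maps coincide. For a Calabi--Yau threefold this contribution \emph{vanishes} for dimension reasons: $[Q'_{0,2,d}(X)]\virt=(\mathrm{ev}_1\times\mathrm{ev}_2)^*\Delta_X\cap[Q_{0,2,d}(X)]\virt$ is a degree-$3$ class capped with a degree-$2$ cycle. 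There is therefore no Hodge integral on $\overline{M}_{2,1}$ anywhere in the argument.

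The $c_2(T_X)$ terms come instead from your stratum (ii), and your computation there is too naive. The contribution of $\overline{M}_{1,1}\times Q_{0,2,d}(X)\times\overline{M}_{1,1}$ is not simply the product of two $\psi$-integrals on $\overline{M}_{1,1}$ divided by the automorphism factor: one must cap with the class $[K_1K_2]_{2\dim X-2}$, where $K_j=c(\cH^\vee\boxtimes\mathrm{ev}_j^*T_X)/c(\LL^\vee\boxtimes\LL^\vee)$, and it is the numerator $c(\cH^\vee\boxtimes\mathrm{ev}_j^*T_X)$ that produces the $\mathrm{ev}_j^*c_2(T_X)$ terms. Expanding for $\dim X=3$ gives the nontrivial piece $\alpha_1\alpha_2(-3\psi_1\psi_2-3\beta_1-3\beta_2)$ with $\beta_j=\mathrm{ev}_j^*c_2(T_X)$, whence the coefficient $-\tfrac{3}{2\cdot24^2}$. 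Finally, the coefficient $\tfrac{1}{2\cdot24^2}$ in front of $c_1(\LL_1)c_1(\LL_2)$ is \emph{not} $(1/24)^2\cdot(1/2)$ from stratum (ii) alone: stratum (i) also contributes to $[Q_{0,2,d}(X)]\virt$ through the genus-$1$ reduced quantum Lefschetz correction $e^{\refi}(V_{1,1,d})\cap[Q^{\red}_{1,1,d}(\PP^n)]=[Q_{1,1,d}(X)]\virt-\tfrac{c_1(\LL_2)}{12}[Q_{0,2,d}(X)]\virt$, giving an extra $\tfrac{2}{24^2}c_1(\LL_1)c_1(\LL_2)$ which combines with $-\tfrac{3}{2\cdot24^2}$ from stratum (ii) to produce the stated $\tfrac{1}{2\cdot24^2}$. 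The paper organises all of this via the $p$-field space $Q_p$ and an explicit cone decomposition rather than a direct localised Euler class of $V_{2,0,d}$; your framework could perhaps be made to work, but not with the stratum list or the integral attributions you have.
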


Using the defining section $f=(f_i)_i\in \Gamma(\PP^n,\oplus_i\cO(\ell_i))$ of $X\subset \PP^n$, the first term in the RHS of \eqref{qlp1} is localised to $Q(X):=Q_{2,0,d}(X)$ via refined Euler class $e^{\mathrm{ref}}(V_{2,0,d})$ \cite[Section 14.1]{Fu}\footnote{This is called the localised top Chern class there.} defined by the section $\pi_*\mathsf{f}^*f\in \Gamma(V_{2,0,d})$ cutting out $Q(X)=(\pi_*\mathsf{f}^*f)^{-1}(0)$. The last two terms in the RHS are cycles on $Q(X)$ via the pushforwards of embeddings,
\smallskip
\begin{enumerate}
\item 
$ \iota_1 : \overline{M}_{1,1} \times Q_{1,1,d}(X) \hookrightarrow Q(X)$,
\begin{figure}[h]
\begin{center}
\includegraphics[scale=0.2]{./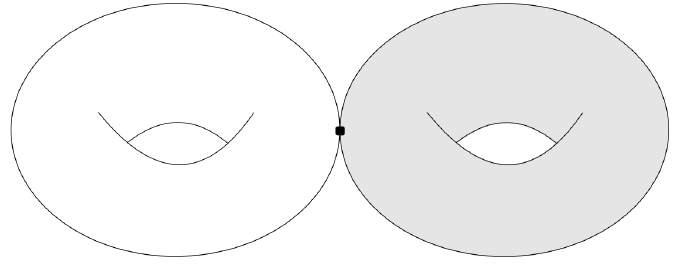}
\end{center}
\end{figure}
\\
\item
$ \iota_2 : \overline{M}_{1,1} \times Q_{0,2,d}(X) \times \overline{M}_{1,1} \xrightarrow{2:1} Q(X)$.
\begin{figure}[h]
\begin{center}
\includegraphics[scale=0.2]{./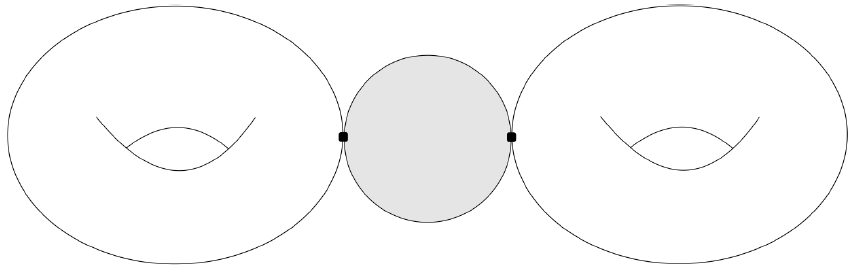}
\end{center}
\end{figure}
\\
\end{enumerate}
On these loci $\oplus_iR^1\pi_*\mathsf{f}^*\cO(\ell_i)$ does not vanish, obstructing the original formula \eqref{naiveQLP1} is satisfied. Note that the image of $\iota_2$ is contained in the image of $\iota_1$, but the rank of $\oplus_iR^1\pi_*\mathsf{f}^*\cO(\ell_i)$ jumps on the image of $\iota_2$.

\smallskip

In fact Theorem \ref{QLP1} for a Calabi-Yau $3$-fold is induced by the following quantum Lefschetz formula in Theorem \ref{QLP} for any complete intersection. 
In this general case, we may have a nontrivial contribution from 
\begin{enumerate}
\item[(3)]
$ \iota_3 : \overline{M}_{1,2} \times Q'_{0,2,d}(X) \hookrightarrow Q(X)$,
\begin{figure}[h]
\begin{center}
\includegraphics[scale=0.25]{./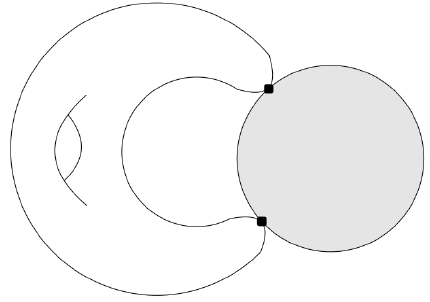}
\end{center}
\end{figure}
\\
where $Q'_{0,2,d}(X)\hookrightarrow Q_{0,2,d}(X)$ is the closed substack on which the two evaluation maps are the same $\mathrm{ev}_1=\mathrm{ev}_2$.
\end{enumerate}
as well. This (3) does not have a contribution for a Calabi-Yau $3$-fold. These three loci in (1), (2) and (3) are the places where $\oplus_iR^1\pi_*\mathsf{f}^*\cO(\ell_i)$ does not vanish exactly.

Before stating Theorem we introduce some (Chow) cohomology classes to simplify the statement. Denoting by $\cH$ the Hodge bundle $\cH:=\pi_*\omega_C$ we define
$$
K\ :=\ \frac{c\,(\cH^\vee \boxtimes \mathrm{ev}^* T_X)}{c\,(\LL^\vee \boxtimes \LL^\vee)},\ 
A^t\ :=\ \frac{c\,(\cH^\vee \boxtimes \mathrm{ev}^* T_X)}{c\,(\LL^\vee \boxtimes 1)^t},\ B\ :=\ \frac{1}{c\,(1 \boxtimes \LL^\vee)}.
$$
And we denote by $K_i$, $A^t_i$, $B_i$ the classes corresponding to the $i$-th marked point, whereas by $[K]_i$, $[A^t]_i$, $[B]_i$ the degree $i$ parts. We also define a (Chow) homology class
\begin{align}\label{Q1'}
[Q'_{0,2,d}(X)]\virt\ :=\ (\mathrm{ev}_1 \times \mathrm{ev}_2)^*\Delta_X \cap [Q_{0,2,d}(X)]\virt 
\end{align}
using the diagonal class $\Delta_X\in A^{\dim X}(X\times X)$. The bundle $V_{2,0,d}$ on $Q^{\mathrm{red}}_{2,0,d}(\PP^n)$ is defined by $\oplus_i \pi_*\mathsf{f}^*\cO(\ell_i)$.

\begin{Thm*}\label{QLP}
For $d \geq 3$, we have an equivalence in the Chow group of $Q_{2,0,d}(X)$,
\begin{align}
[Q_{2,0,d}(X)]\virt  =&  \; e^{\mathrm{ref}}(V_{2,0,d}) \cap [Q^{\mathrm{red}}_{2,0,d}(\PP^n)]  \nonumber \\
& + [K]_{\dim X-1} \cap  \left( [\overline{M}_{1,1}] \times [Q_{1,1,d}(X)]\virt \right) \nonumber  \\ \label{qlp}  
& + \left( \frac{[K_1 K_2]_{2\dim X-2}}{2}-[K_1]_{\dim X-1}[K_2]_{\dim X-1} \right) \cap \l( \, [\overline{M}_{1,1}] \times  [Q_{0,2,d}(X)]\virt \times [\overline{M}_{1,1}] \, \r)  \\ \nonumber
& + \frac{1}{2} \sum_{a=0}^{\dim X-1} (-1)^a[A^{a+1}_1]_{\dim X-1-a}[B_1B_2]_{a-1} \cap \l( [\bar{M}_{1,2}]  \times [Q'_{0,2,d}(X)]\virt  \r).
\end{align}
\end{Thm*}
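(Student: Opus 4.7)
The plan is to compute $[Q_{2,0,d}(X)]^{\vir}$ by lifting the section $\sigma:=\pi_*\mathsf{f}^*f$ of $V_{2,0,d}$ (cutting out $Q_{2,0,d}(X)$ inside $Q_{2,0,d}(\PP^n)$) to the smooth resolution $\wtil{Q}_{2,0,d}(\PP^n)\to Q_{2,0,d}(\PP^n)$ of Section \ref{desin}, where $V_{2,0,d}$ becomes a genuine vector bundle. Fulton's refined Euler class \cite[\S14]{Fu} then gives
\[
[Q_{2,0,d}(X)]^{\vir} \;=\; e^{\refi}(V_{2,0,d}) \cap [\wtil{Q}_{2,0,d}(\PP^n)],
\]
pushed forward to $Q_{2,0,d}(X)$, and the task reduces to decomposing $[\wtil{Q}_{2,0,d}(\PP^n)]$ as the proper transform $[\wtil{Q}^{\red}]$ plus correction cycles supported over the bad loci $\mathrm{Im}(\iota_1),\mathrm{Im}(\iota_2),\mathrm{Im}(\iota_3)$ where $R^1\pi_*\mathsf{f}^*\cO(1)$ fails to vanish.

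First I would stratify $Q_{2,0,d}(\PP^n)\smallsetminus Q^{\red}_{2,0,d}(\PP^n)$ by the combinatorics of contracted elliptic tails: one tail supporting the genus raise gives stratum $(1)$, two tails give stratum $(2)$, and a single contracted genus $2$ component with two identified evaluations gives stratum $(3)$. Identify the corresponding exceptional components of $\wtil{Q}_{2,0,d}(\PP^n)$ as projectivisations (or towers of them) of the cokernel sheaf produced by the blowups of Section \ref{desin}, fibered over $\overline{M}_{1,1}\times Q_{1,1,d}(\PP^n)$, $\overline{M}_{1,1}\times Q_{0,2,d}(\PP^n)\times\overline{M}_{1,1}$, and $\overline{M}_{1,2}\times Q'_{0,2,d}(\PP^n)$ respectively. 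On each such stratum, the pulled-back bundle $V_{2,0,d}$ splits (up to a filtration) as a sum of a piece pulled back from the main component and an excess piece built from the Hodge bundle $\cH$ on the $\overline{M}_{1,*}$ factor tensored with $\mathrm{ev}^*N_{X/\PP^n}$, with node cotangent lines $\LL$ providing the twist from the normal directions.

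Next I would run the excess intersection computation on each stratum: pushing $e^{\refi}(V_{2,0,d})\cap [\text{exceptional component}]$ down to $Q_{2,0,d}(X)$ produces $e(V^{\mathrm{excess}})/c(N_{\mathrm{stratum}})$ capped with the fundamental cycle of the base. Using the exact sequence relating $V_{2,0,d}$ on the genus $2$ moduli with $V_{g',k',d}$ on the main quasimap factor and the $T_X$-twisted Hodge bundle on the elliptic component, one identifies these Segre-type quotients with the classes $[K]_{\dim X-1}$, $\tfrac12[K_1K_2]_{2\dim X-2}$, and the $A,B$ combination in \eqref{qlp}. The numerical prefactors $\tfrac12$ reflect the $2{:}1$ automorphism of $\iota_2$ and, for $\iota_3$, the pullback by the diagonal $(\mathrm{ev}_1\times\mathrm{ev}_2)^*\Delta_X$ used in \eqref{Q1'}; the cotangent factors $c(\LL^\vee\boxtimes\LL^\vee)$ etc.\ in the denominators come precisely from the normal bundles to the attaching nodes.

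The hard step will be the inclusion--exclusion between the three strata, since $\mathrm{Im}(\iota_2)\subset \mathrm{Im}(\iota_1)$ and both meet $\mathrm{Im}(\iota_3)$ along their common boundary. Naively summing stratum contributions over-counts along these overlaps, and the negative term $-[K_1]_{\dim X-1}[K_2]_{\dim X-1}$ in \eqref{qlp} is exactly the Segre-correction on $\mathrm{Im}(\iota_1)\cap\mathrm{Im}(\iota_2)$. To justify it cleanly I would order the blowups of Section \ref{desin} so that the exceptional divisors corresponding to $\iota_2$ are created on top of the ones for $\iota_1$, then apply the blow-up formula for refined Euler classes stage by stage, tracking proper transforms of $\sigma$; the resulting telescoping cancellation will reproduce each $\tfrac{1}{2}[K_1K_2]-[K_1][K_2]$ packaging. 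Finally, a CY3 specialisation ($\dim X=3$ with $\sum \ell_i=n+1$) should collapse $K,A,B$ to the simple $c_1(\LL)$, $c_2(T_X)$ expressions in \eqref{qlp1}, giving Theorem~\ref{QLP1} as a corollary.
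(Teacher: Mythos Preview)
Your opening identity
\[
[Q_{2,0,d}(X)]^{\mathrm{vir}} \;=\; e^{\refi}(V_{2,0,d}) \cap [\wtil{Q}_{2,0,d}(\PP^n)]
\]
is precisely the naive quantum Lefschetz formula \eqref{naiveQLP1} that the theorem is meant to \emph{correct}, and it does not hold. Two things fail simultaneously. First, $\wtil{Q}_{2,0,d}(\PP^n)$ is not smooth: by \eqref{QDEcomp} it is a union of four irreducible components meeting along positive-dimensional loci, so its fundamental class is not the virtual class coming from the perfect obstruction theory. Second, $V_{2,0,d}=\oplus_i\pi_*\cL^{\otimes \ell_i}$ is \emph{not} a vector bundle on all of $\wtil{Q}_{2,0,d}(\PP^n)$; the blowup of Section~\ref{desin} only makes it a bundle on the reduced component $\wtil{Q}^{\red}$, while on the components $\wtil{Q}^{(i)}$ the sheaf $R^1\pi_*\cL^{\otimes\ell_i}$ survives and enters the obstruction theory of $Q(X)$ in an essential way. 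There is therefore no Fulton-style identity to decompose, and the subsequent excess-intersection bookkeeping, however the inclusion--exclusion is arranged, is computing the wrong object from the outset.

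The paper circumvents this by passing to the $p$-fields space $Q_p$ and the sign identity \eqref{X=p}. The payoff is that $\wtil{Q}_p$ admits explicit local cut-out models \eqref{KuModel} over a smooth base, from which one reads off the irreducible decomposition \eqref{Cdec} of the \emph{intrinsic normal cone} and hence a decomposition \eqref{Qdec} of the \emph{cosection-localised} virtual class. Each piece $[\wtil{Q}_p^{(i)}]^{\mathrm{vir}}$ is then handled via Kiem--Li's localised Gysin map $e^{\KL}$ rather than a refined Euler class of $V$: one deforms the cone (Section~\ref{DeformCone}), compares it with the normal bundle $\bC_{(i)}$ of the boundary stratum after further blowups (Section~\ref{nomore}), and uses the multiplicative property of $e^{\KL}$ to split off the factor $\bK^{(i)}=\cH^\vee\boxtimes\mathrm{ev}^*T_X$ that produces the $K$, $A$, $B$ classes in \eqref{qlp}. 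In particular the subtraction $-[K_1]_{\dim X-1}[K_2]_{\dim X-1}$ that you attribute to stratum overlap actually arises for a different reason: it appears when the genus-one splitting formula of \cite{LL22} is substituted into the $i=1$ contribution \eqref{FINAL1}, not from any inclusion--exclusion between $\iota_1$ and $\iota_2$ (indeed the cone components in \eqref{Cdec} are already irreducible, so no such correction is needed). Without the $p$-field and cosection-localisation machinery there is no mechanism in your outline that isolates the cone components or produces the correct localised classes, so the proposal is missing the central idea.
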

In Remark \ref{333} we explain $A^a_1=A^a_2$, so the last term is not so strange.

\medskip
\subsection*{Acknowledgements}
We are grateful to Jingchen Niu for delivering us his expertise on the desingularisations of the genus $2$ moduli spaces. 
We also thank Luca Battistella, Navid Nabijou, Richard Thomas for helpful comments.

\subsection*{Notation}
For a morphism $f: X \to Y$ of spaces and a perfect complex $\EE$ on $Y$, we often denote by $\EE|_X$ the derived pullback $f^*\EE$. We sometimes regard a locally free sheaf $E$ as its total space.

We denote by $\fM_{g,k,d}$, or simply by $\fM$, the Artin stack of prestable curves with non-negative integer on each component (playing a role of degree) whose sum is $d$. Similarly $\fM^{line}_{g,k,d}$, or simply $\fM^{line}$, denotes the Artin stack of curves with degree $d$ line bundles. The Artin stack of curves with degree $d$ divisors is denoted by $\fM^{div}_{g,k,d}$, or simply $\fM^{div}$. 

We denote by $Q^{(i)}$ the image of $\iota_i$ in the picture above (i) for either the moduli spaces of stable quasimaps or the $p$-fields spaces. For instance on $Q^{(3)}$, the evaluation maps (of the $g=0$ quasimap) are the same $\mathrm{ev}_1=\mathrm{ev}_2$. Furthermore, we use the script $(i)$ for relevant objects of the embedding $\iota_i$ unless it needs an explanation. For instance a bundle on $Q^{(i)}$ will be denoted with the script $(i)$.

For variables with two subindices $y_{ij}$, we say $y_i=0$ if $y_{ij}=0$ for all $j$. Also we say $y=0$ if $y_{ij}=0$ for all $i$ and $j$.

\setcounter{tocdepth}{1}
\tableofcontents

\section{Stable quasimaps, $p$-fields and the plan}\label{Sec1}

\subsection*{Stable quasimaps}
A {\em genus $g$, degree $d$ quasimap to $X$ with $k$ marked points} is a triple $(C,L,u)$ where $C$ is a genus $g$, projective, nodal, prestable curve with $k$ marked points, $L$ is a degree $d$ line bundle on $C$, and $u = (u_0,\dots,u_n)$ is a section of $L^{\oplus n+1}$ such that 
\begin{align} \label{cond}
\text{$f_i(u)=0 \in \Gamma(C, L^{\otimes \ell_i})$ for all $i$.}
\end{align}
Here $L$ plays a role of $\mathsf{f}^*\cO(1)$. It is a {\em stable quasimap} if it comes with the stability conditions\footnote{In contrast, $(C,L,u)$ is a {\em stable map} defining Gromov--Witten invariants if it is equipped with the stability conditions $1$. $\omega_{C}^{\log} \otimes L^{\otimes 3}$ is ample on $C$, and $2$. the zero of $u$ is empty.}
\begin{itemize}
\item[-]
$\omega_{C}^{\log} \otimes L^{\varepsilon}$ is ample on $C$ for any $\varepsilon >0$, and
\item[-]
the zero of $u$ is a divisor which does not meet nodes nor marked points.
\end{itemize}
We denote by $Q_{g,k,d}(X)$, or simply by $Q(X)$, the moduli space of stable quasimaps.
By \cite{MOP11, CK10, CKM14}, it is proper and equipped with a natural perfect obstruction theory so that the virtual fundamental class
\begin{align}\label{QX}
\left[ Q(X) \right]\virt \ \in\ A_{\mathrm{vdim}}(Q(X) )
\end{align}
is defined, where $\mathrm{vdim}$ denotes the virtual dimension
\[
\mathrm{vdim} \ =\ (\dim X - 3)(1-g) + k - d \cdot c_1(K_X)([line]).
\]
The {\em stable quasimap invariant of $X$} is defined to be an integration over this virtual class.

The reason why the quantum Lefschetz property for the quasimap invariants is simpler is because a quasimap does not have a rational component with less than two special points (called a rational tail) on its domain curve.

\subsection*{Stable quasimaps with $p$-fields} The coherent sheaf $R^1V:=\oplus_iR^1\pi_*\mathsf{f}^*\cO(\ell_i)$ on $Q(\PP^n)$ may not vanish. 
We denote by $Q_{p,g,k,d}(\PP^n)$, or simply by $Q_p$, its dual space 
$$
Q_p \ :=\ \Spec_{\cO_{Q(\PP^n) } } \left( \mathrm{Sym} R^1V \right).
$$
So $Q_p$ parametrises $(C,L,u,p=(p_1, ..., p_m))$ where $(C,L,u)$ is a stable quasimap to $\PP^n$ and 
\begin{align*}
p_i \in \Gamma(C, \omega_C \otimes L^{-\ell_i}).
\end{align*}
Recall that imposing the condition \eqref{cond} defines the space $Q(X)$ from $Q(\PP^n)$, whereas the above extra data determines $Q_p$ from $Q(\PP^n)$.
We will call the section $p=(p_1,\dots,p_m)$ {\em $p$-fields}.

The space $Q_p$ may not be proper, but still comes with a natural perfect obstruction theory, so that the virtual fundamental class
$$
[Q_p]\virt \ \in\ A_{\mathrm{vdim}}(Q_p)
$$
is defined.
Using the cosection 
$$
\EE\dual_{Q_p/\fM^{line}} = (R\pi_*\cL^{\oplus n+1} \oplus \bigoplus_i (R\pi_*\cL^{\otimes \ell_i})^{\vee} [-1])|_{Q_p} \longrightarrow \cO_{Q_p}[-1],
$$
defined in \cite{CL12}, cosection localisation \cite{KL13} allows us to find a localised class $[Q_p]\virt_{\mathrm{loc}}$ of $[Q_p]\virt$ to a smaller space $j: Q(X) \hookrightarrow Q_p$,
\begin{align}\label{locQp}
[Q_p]\virt_{\mathrm{loc}}\ \in\ A_{\mathrm{vdim}}(Q(X)), \ \ \ j_*[Q_p]\virt_{\mathrm{loc}} \ =\ [Q_p]\virt.
\end{align}
Then by \cite{KO18, CL20, CJW21, Pi20}, the localised class $[Q_p]\virt_{\mathrm{loc}}$ is equal to the class $[Q(X)]\virt$ for $X$ defined in \eqref{QX} up to a sign 
\begin{align}\label{X=p}
[Q(X)]\virt \ =\ (-1)^{d(\sum_i \ell_i) + m(1-g) }[Q_p]\virt_{\mathrm{loc}}.
\end{align}

\subsection*{Plan of the proof of Theorem \ref{QLP}}
We use \eqref{X=p} to prove Theorem \ref{QLP}.
A good thing to work with $Q_p$ instead of $Q(X)$ is that we can find a nice enough local cut-out model of $Q_p$ whereas it is hard for $Q(X)$.
In Section \ref{Kura}, we describe this explicit cut-out model of $Q_p$ after the suitable base-change of $Q_p$ in Section \ref{desin}.
Using this, we compute the intrinsic normal cone of $Q_p$ in Section \ref{virdecomp} to obtain a decomposition of the virtual class
\begin{align}\label{Qdec}
[Q_{p,2,0,d}]\virt_{\mathrm{loc}} \ =\ [Q_p^{\mathrm{red}}]\virt \ +\ [Q_p^{(1)}]\virt \ +\ [Q_p^{(2)}]\virt \ +\ [Q_p^{(3)}]\virt.
\end{align}
Note that the indices `red', `$(1)$', `$(2)$' and `$(3)$' reflect their geometric origins labelled above. So $Q_p^{(1)},Q_p^{(2)},Q_p^{(3)}$ are supported on the images of the node-identifying morphisms $\iota_i$, ignoring $p$-fields. In fact we will investigate that they are bundles over the images in Section \ref{virdecomp}.

\medskip
Then in Section \ref{reduced}, we prove that $[Q_p^{\mathrm{red}}]\virt$ follows the original quantum Lefschetz formula \eqref{naiveQLP1}
$$
[Q_{p}^{\mathrm{red}}]\virt \ =\ (-1)^{d(\sum_i \ell_i) - m} \; e^{\mathrm{ref}}(V_{2,0,d})\ \cap\ [Q_{2,0,d}^{\mathrm{red}}(\PP^n)].
$$ 
And we show the $i$-th cycle $[Q_p^{(i)}]\virt$ is a part of the RHS of \eqref{qlp}. For $i=1$ for instance, we obtain
\begin{align}\label{Q1vir}
[Q_p^{(1)}]\virt\ =\ (-1)^m\left[ \frac{c(\cH^\vee \boxtimes \mathrm{ev}^*T_X)}{c(\LL^\vee \boxtimes \LL^\vee)} \right]_{n-m-1} \cap  \left( [\overline{M}_{1,1}] \times  [Q^{\mathrm{red}}_{p,1,1,d}]\virt \right)
\end{align}
via the pushforward by $\iota_1$. A very brief interpretation of this equality is that the difference of the obstruction bundles defining $[Q_p^{(1)}]\virt$ and $[Q^{\mathrm{red}}_{p,1,1,d}]\virt$ (in the $K$-group of $Q_p^{(1)}$, via the pullback) can be written in terms of the bundle structure of $Q_p^{(1)}$ over the image of $\iota_1$ as well as the pullback bundles of $\cH^\vee \boxtimes \mathrm{ev}^*T_X$, $\LL^\vee \boxtimes \LL^\vee$. To realise this interpretation to give an actual proof, we do massage spaces and bundles -- deformations, blowups and twistings by divisors, etc. -- in Section \ref{LOT} so that we can get a tidy form \eqref{Q1vir}. Once we do these for all $i$, then by using \cite[Corollary 1.3]{LL22}
$$
e^{\mathrm{ref}}(V_{1,1,d}) \cap [Q_{1,1,d}^\red(\PP^n)] \ =\   [Q_{1,1,d}(X)]\virt - \frac{c_1(\LL_2)}{12} [Q_{0,2,d}(X)]\virt
$$
together with \eqref{X=p}, the decomposition \eqref{Qdec} proves Theorem \ref{QLP}.

\section{Local defining equations of the $p$-field space} \label{Kura}

For a morphism of vector bundles $d: A \to B$ over a smooth Artin stack $M$, we consider the kernel of $d$ as a space
$$
\ker d\ :=\ \Spec_{\cO_M}\left(\Sym (\mathrm{coker} d^*)\right)\ \subset \ A\ =\ \Spec_{\cO_M}\left(\Sym A^*\right).
$$
Denoting by $\tau_A$ the tautological section, $\ker d$ has a cut-out model
$$
\xymatrix@=18pt{
& B|_{A} \ar[d] \\  
\ker d \ := \ (d\circ\tau_{A})^{-1}(0)\ \subset\hspace{-6mm} & A.\ar@/^{-2ex}/[u]_{d\circ \tau_{A}}}
$$
Hence the pullback complex $\{d: A \to B\}|_{\ker d}$ defines a dual relative perfect obstruction theory of $\ker d$ over $M$.

The purpose of this section is to write $Q_p$ as (an open substack of) $\ker d$ over $\fM^{div}$.

\subsection{Cut-out model of the $p$-field space}\label{Ku}
Unlike considering $\fM^{line},$ there is no canonical forgetful morphism of the $p$-field space $Q_p \to \fM^{div}$. But it is defined locally as follows. Since $u=(u_0, ..., u_n)$ is not identically zero for a point $(C,L,u,p) \in Q_p$, we can pick a combination ${\bf u}=\sum a_iu_i\in H^0(C,L)$ nonconstant on each component of $C$ on which $L$ has positive degree. Since it is an open condition we have a morphism
$$
Q_p \ \longrightarrow\ \fM^{div}, \ \ \ (C,L,u,p) \ \longmapsto \ (C, {\bf u}^{-1}(0))
$$
on a local neighborhood.

Let $\cD$ be the universal divisor on the universal curve $\pi: \cC\to \fM^{div}$. Then locally, $Q_p$ is the open substack (defined by the stability condition) of the kernel of a representative
\begin{align} \label{localpot}
[A \stackrel{d}{\lra} B]\ \cong\ R\pi_*\cO_{\cC}(\cD)^{\oplus n} \oplus \bigoplus_i \left( R\pi_*\cO_{\cC}(\ell_i\cD)[1] \right)\dual.
\end{align} 
Hence it defines a local cut-out model relative to $\fM^{div}$ and a natural (local) perfect obstruction theory. 

Since we work locally, we may assume $A$ and $B$ are trivial bundles. Then $d$ can be considered as a multi-valued function
\begin{align}\label{OOO}
d\ :\ \fM^{div}\times\CC^{\rank A}\ \longrightarrow\ \CC^{\rank B}
\end{align}
defining $Q_p$ as (an open substack of) its zero. In the rest of the section, we find a simple expression of $d$ by coordinate changes and blowups.

\subsection{Key Lemma}
Now we focus on $(g,k)=(2,0)$ throughout the Section. We work {\em \'etale locally} on $\fM^{div}$, sometimes without mentioning it. For instance by an element of $\Gamma(\cO_{\fM^{div}})$, we mean an \'etale local function of $\fM^{div}$.

As we have explained in Introduction, considering stable quasimaps has a big advantage in making the quantum Lefschetz formula less complicated than considering stable maps. But there is (essentially only) one technical thing to check, which is obvious in stable maps -- near a domain curve of a stable map $f: C \to \PP^n$, $f^*\cO(1)$ is linearly equivalent to $\cO(\sum_{i=1}^d \cD_i)$ with disjoint, fiberwise degree $1$ divisors $\cD_i$. Unfortunately it is not immediately seen near a domain curve of a stable quasimap. Since this was the important starting point to find local cut-out models for stable map moduli spaces in \cite{HL10, HLN18} we need the following Lemma. 

In fact, the Lemma is quite general -- it holds near any prestable curve, including a domain curve of a stable quasimap, in genus $2$. Let $\cD$ be any effective divisor of $\deg =d \geq 3$ on the universal curve $\cC$ of $\fM$. 

\begin{lemm}\label{divisor splitting}
Locally $\cD$ is linearly equivalent to a sum $\sum_{i=1}^d \cD_i$ of disjoint divisors of degree $1$ at each fiber.
\end{lemm}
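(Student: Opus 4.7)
The claim is \'etale local on $\fM$, so fix a closed point $x \in \fM$ whose fiber is a prestable arithmetic-genus-$2$ curve $C_0$; write $L := \cO_\cC(\cD)$ and $L_0 := L|_{C_0}$. It suffices to exhibit, over some \'etale neighborhood $U$ of $x$, a section $s \in \Gamma(\cC_U, L|_{\cC_U})$ whose zero divisor is a disjoint union $\bigsqcup_{i=1}^d \cD_i$ of sections of $\pi \colon \cC_U \to U$ of fiberwise degree $1$: the ratio $s/s_\cD$ with the tautological section $s_\cD$ cutting out $\cD$ then supplies the linear equivalence $\cD|_U \sim \sum_i \cD_i$.

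By Riemann--Roch on $C_0$, $\chi(L_0) = d - 1 \geq 2$, so $h^0(C_0, L_0) \geq 2$. The first main step is a Bertini-type claim on the nodal curve $C_0$: find a section $s_0 \in H^0(C_0, L_0)$ whose zeros are $d$ pairwise distinct simple zeros at smooth points. Because $\cD$ is effective, $L_0|_{C'}$ has non-negative degree on every subcurve $C' \subseteq C_0$, which limits the possible fixed components of $|L_0|$; since only finitely many topological types of prestable genus-$2$ dual graphs occur, I plan to carry out this step by a direct case analysis, using $d \geq 3$ to guarantee enough positive-degree components on which the zeros can be spread apart.

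The second step is to lift $s_0$ to a section $s \in \Gamma(\cC_U, L|_{\cC_U})$ on an \'etale neighborhood $U$. When $H^1(C_0, L_0) = 0$, upper semi-continuity forces $R^1\pi_*L = 0$ in a neighborhood of $x$; hence $\pi_*L$ is locally free there, its formation commutes with base change, and $s_0$ lifts immediately. For general $C_0$ I set
\[
V \;:=\; \image\bigl((\pi_*L)_x \otimes_{\cO_{\fM,x}} k(x) \longrightarrow H^0(C_0, L_0)\bigr) \;\subseteq\; H^0(L_0),
\]
the subspace of sections that deform to nearby fibers. By upper semi-continuity $\dim V$ equals the generic value of $h^0(L_y)$, namely $d - 1 \geq 2$, and $\PP(V)$ inherits the no-fixed-component property from the generic smooth fiber; I can therefore perform the Bertini argument inside $V$, and the resulting $s_0 \in V$ lifts by construction.

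The conditions ``zeros are simple, pairwise distinct, and at smooth points of the fiber'' are open in $U$, so after possibly shrinking $U$ the vanishing $\mathrm{div}(s) \subset \cC_U$ splits as $\bigsqcup_{i=1}^d \cD_i$ with each $\cD_i \to U$ \'etale of degree $1$ and the $\cD_i$ pairwise disjoint. The main obstacle I anticipate is the combined Bertini-plus-lifting argument on the degenerate $C_0$ with $H^1(L_0) \neq 0$, which I plan to resolve by the dual-graph case analysis mentioned above, checking in each degenerate configuration that the space $V$ of liftable sections still contains members with $d$ smooth simple zeros.
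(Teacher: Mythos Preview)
Your fiberwise-then-lift strategy is not the paper's, and the lifting step has a genuine error. The paper works directly in the family: it fixes auxiliary degree-$1$ divisors $\cA_1, \cA_2, \cB$ on the minimal genus-$2$ subcurve so that $\pi_*\cO_\cC(\cD + \cA_1 + \cA_2 - \cB)$ is locally free of rank $d$, and from this bundle extracts a section $s$ of $\cO_\cC(\cD)$ vanishing on $\cB$, hence fiberwise independent of $s_\cD$. The pair $(s, s_\cD)$ then defines, after removing the common base locus $\cD'$, a finite cover $\cC \to \PP^1$; a generic fiber of this cover supplies the disjoint degree-$1$ divisors, and one recurses on $\cD'$. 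Producing the second section through a locally free pushforward is exactly what bypasses the lifting obstruction you anticipate.

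Your assertion that $\dim V$ equals the generic $h^0(L_y) = d - 1$ does not follow from semicontinuity and is false precisely where you need it. Near the stratum $\overline{M}_{1,2} \times \fM_{0,2,d}$ (node-smoothing parameters $\zeta_1, \zeta_2$), the paper represents $R\pi_*\cO_\cC(\cD - \cB)$ by a $2 \times d$ matrix whose second row is $(0,\, \zeta_1 + a\zeta_2,\, \zeta_2 + b\zeta_1,\, \ldots)$. For $d = 3$ the kernel is free of rank one, generated by the Koszul syzygy $(0,\, \zeta_2 + b\zeta_1,\, -\zeta_1 - a\zeta_2)$, which restricts to zero at $\zeta_1 = \zeta_2 = 0$; adding back the $\cO$-summand from $R\pi_*\cO_\cC(\cD) \cong R\pi_*\cO_\cC(\cD-\cB) \oplus \cO$, one finds $\pi_*L$ locally free of rank $2$ but $\dim V = 1 < 2 = d-1$, leaving no room for a Bertini argument inside $V$. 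Separately, prestable genus-$2$ curves admit arbitrarily long rational chains, so the promised case analysis over dual graphs cannot be finite.
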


The key idea of the proof is to construct a covering map $\cC\to \PP^1$ by picking two linearly independent sections $H^0(\cC,\cO(\cD))$, whose dim $= d+1-g+\dim H^1(\cC,\cO(\cD))\geq 2$, not having common zeros. Then the inverse image of a generic point of $\PP^1$ is $d$-many distinct points.

\begin{proof}
Pick any local divisor $\cB$ on $\cC$ lying on the minimal genus $2$ subcurve, having degree $1$ at each fiber and not meeting $\cD$.
Because $\cB \cap \cD = \emptyset$, the evaluation morphism $\pi_*(\cO_{\cC}(\cD)) \to \cO_{\cC}(\cD)|_{\cB} \cong \cO_{\fM}$ is surjective, where $\pi : \cC \to \fM$ denotes the projection morphism. This induces an exact sequence
\begin{align}\label{DBses}
0\ \to\ \pi_* \cO_{\cC}(\cD-\cB)\ \to\ \pi_*(\cO_{\cC}(\cD))\ \to\ \cO_{\fM}\ \to\ 0. 
\end{align}

Meanwhile, as in \cite[Section 2.3]{HLN18}, we can choose other divisors $\cA_1$ and $\cA_2$ lying on the minimal genus $2$ subcurve such that
\begin{itemize}
\item
$\cA_1$, $\cA_2$, $\cB$ are disjoint to each other, and 
\item
$\cA_1$, $\cA_2$ lie on different components if the genus $2$ component consists of two genus $1$ components.
\end{itemize}
Similarly, by \cite[Equation (2.5)]{HLN18}, we obtain a sequence
\begin{align}\label{DBAses}
0\ \to\ \pi_* \cO_{\cC}(\cD -\cB)\ \to\ \xymatrix@C=20mm{\pi_* \cO_{\cC}(\cD + \cA_1 + \cA_2-\cB)\ \ar[r]^-{\mathrm{ev}_{\cA_1} \oplus \; \mathrm{ev}_{\cA_2}}& \cO_{\fM}^{\oplus 2}.}
\end{align}
Note that $\pi_* \cO_{\cC}(\cD + \cA_1 + \cA_2-\cB)$ is a rank $d$ vector bundle and hence locally is $\cO_{\fM}^{\oplus d}$. 
Since $d \geq 3$, we can pick a nonzero local section $s\in \Gamma\left(\pi_* \cO_{\cC}(\cD + \cA_1 + \cA_2-\cB)\right)$ mapping to $0$ by $\mathrm{ev}_{\cA_1} \oplus \mathrm{ev}_{\cA_2}$.
Then it factors through $\pi_* \cO_{\cC}(\cD -\cB)$, and hence, by \eqref{DBses}, it can be considered as a section 
$$
s\ :\ \cO_{\cC} \ \longrightarrow\ \cO_{\cC}(\cD), 
$$ 
zero on $\cB$. Since the canonical section $s_{\cD}$ of $\cD$ does not vanish on $\cB$, $s_{\cD}$ and $s$ are linearly independent on every fiber.

The common zero $\cD'$ of $s$ and $s_{\cD}$ has then fiberwise degree $d' \leq d-1$ (which may not be constant at each fiber) because $s$ is zero on $\cB$ but $s_{\cD}$ is not. Then at a fiber the sections $s \otimes s^{-1}_{\cD'}$, $s_{\cD} \otimes s^{-1}_{\cD'}$ of $\cO(\cD-\cD')$ defines a degree $d-d'$ morphism $\phi: \cC \to \PP^1$. Since it cannot be degree $1$ (which means $\phi$ is an isomorphism), we actually have $d' \leq d-2$.
A generic fiber $\phi^{-1}([a;b])$ consists of distinct divisors $\cD_1$, ..., $\cD_{d-d'}$ away from $\cD'$, and hence we have
$$
\cO_{\cC}(\cD-\cD') \ \cong\ \cO_{\cC}\left(\sum_{i=1}^{d-d'} \cD_i\right).
$$
Note that since $\cD' + \sum \cD_i$ is defined by $bs-as_{\cD}$ this isomorphism is not only at the fiber, but an isomorphism locally on $\fM$.

If $d' \geq 3$, we do the same procedure by replacing $\cD'$ by $\cD$ until we get $d' \leq 2$. Then we proved the lemma unless $d'=2$. Now let us assume that $d'=\deg \cD' =2$. Doing the same procedure for $\cD:=\cD'+\cD_1$ which has degree $3$, the procedure terminates since $d'\leq \deg\cD-2=3-2=1$. Hence the proof is completed.
\end{proof}

Considering the universal divisor $\cD$ on the universal curve $\cC$ on $\fM^{div}$, we obtain the following immediate corollary from the exact sequences \eqref{DBses}, \eqref{DBAses} in the proof of Lemma \ref{divisor splitting}.

\begin{coro} \label{twoisos} In the derived category of a local neighborhood of $\fM^{div}$, we obtain an isomorphism induced by \eqref{DBses}
\begin{align*}
 R\pi_* \cO_{\cC}(\cD) \ & \cong\ R\pi_* \cO_{\cC}(\cD - \cB) \oplus [\cO_{\fM^{div}} \stackrel{0}{\lra} 0]. 
\end{align*}
And the sequence \eqref{DBAses} induces an isomorphism
\begin{align*}
 R\pi_* \cO_{\cC}(\cD - \cB)  \ & \cong\ \left[ \xymatrix@C=20mm{\pi_* \cO_{\cC}(\cD + \cA_1 + \cA_2 - \cB) \ar[r]^-{\tiny{\mathrm{ev}_{\cA_1}\oplus \; \mathrm{ev}_{\cA_2} }} & \cO_{\fM^{div}}^{\oplus 2}} \right].
\end{align*}
\end{coro}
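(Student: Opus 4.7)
\medskip\noindent\textbf{Proof plan for Corollary \ref{twoisos}.}
The plan is to extract both isomorphisms from the short exact sequences of sheaves on $\cC$ which already underlie \eqref{DBses} and \eqref{DBAses}, by applying $R\pi_*$ and analysing the resulting distinguished triangles. Throughout, everything is to be read \'etale locally on $\fM^{div}$, which is harmless since the statement is local.

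For the first isomorphism, I would start from the short exact sequence
\[
0\ \to\ \cO_{\cC}(\cD-\cB)\ \to\ \cO_{\cC}(\cD)\ \to\ \cO_{\cB}\ \to\ 0
\]
obtained from restriction to $\cB$. Since $\cB$ is a fibrewise degree $1$ divisor and finite over $\fM^{div}$, one has $R\pi_*\cO_{\cB}=\cO_{\fM^{div}}$ concentrated in degree $0$. Hence the long exact sequence gives exactly \eqref{DBses} on $\pi_*$, together with the identification $R^1\pi_*\cO_{\cC}(\cD-\cB)\cong R^1\pi_*\cO_{\cC}(\cD)$. Because the surjection $\pi_*\cO_{\cC}(\cD)\twoheadrightarrow\cO_{\fM^{div}}$ in \eqref{DBses} is a surjection of locally free sheaves onto a line bundle, it admits an \'etale local splitting, and the induced splitting of $R\pi_*$ in the derived category gives the claimed direct sum decomposition with the summand $[\cO_{\fM^{div}}\stackrel{0}{\to}0]$.

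For the second isomorphism, the relevant short exact sequence is
\[
0\ \to\ \cO_{\cC}(\cD-\cB)\ \to\ \cO_{\cC}(\cD+\cA_1+\cA_2-\cB)\ \to\ \cO_{\cA_1+\cA_2}\ \to\ 0.
\]
Applying $R\pi_*$ yields the distinguished triangle
\[
R\pi_*\cO_{\cC}(\cD-\cB)\ \longrightarrow\ R\pi_*\cO_{\cC}(\cD+\cA_1+\cA_2-\cB)\ \xrightarrow{\mathrm{ev}_{\cA_1}\oplus\mathrm{ev}_{\cA_2}}\ \cO_{\fM^{div}}^{\oplus 2}\ \xrightarrow{+1}.
\]
So the desired identification is simply the statement that the middle term is concentrated in degree $0$ and given by $\pi_*\cO_{\cC}(\cD+\cA_1+\cA_2-\cB)$; granting this, the triangle immediately presents $R\pi_*\cO_{\cC}(\cD-\cB)$ as the two-term complex in the statement, placed in degrees $0$ and $1$.

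The single nontrivial step — and the main obstacle — is therefore the vanishing
\[
R^1\pi_*\cO_{\cC}(\cD+\cA_1+\cA_2-\cB)\ =\ 0
\]
locally on $\fM^{div}$. By cohomology and base change this reduces to checking $H^1$ vanishing on each geometric fibre. The total fibrewise degree is $d+1\geq 4>2g-2$, which handles the smooth case by Riemann--Roch/Serre duality. For nodal degenerations one has to argue component by component: this is exactly where the careful placement of $\cA_1,\cA_2,\cB$ on the minimal genus $2$ subcurve — in particular the requirement in the proof of Lemma \ref{divisor splitting} that $\cA_1$ and $\cA_2$ lie on different genus $1$ components whenever the genus $2$ locus splits — forces the restriction of $\cO_{\cC}(\cD+\cA_1+\cA_2-\cB)$ to each irreducible component to be nonspecial, so that a standard Mayer--Vietoris / normalisation sequence argument gives the claimed vanishing. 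Once this is in place, both isomorphisms follow directly from the two distinguished triangles above.
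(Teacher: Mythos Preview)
Your proposal is correct and follows the same route the paper intends: the paper states this as an ``immediate corollary'' of the sequences \eqref{DBses} and \eqref{DBAses} (themselves imported from \cite[Section~2.3 and Equation~(2.5)]{HLN18}), and your argument simply unpacks what ``immediate'' means---apply $R\pi_*$ to the two underlying short exact sequences, split the first triangle locally, and reduce the second to the vanishing $R^1\pi_*\cO_{\cC}(\cD+\cA_1+\cA_2-\cB)=0$. The paper does not spell out this vanishing either (it records only that $\pi_*\cO_{\cC}(\cD+\cA_1+\cA_2-\cB)$ is a rank~$d$ bundle, deferring to \cite{HLN18}), so your component-by-component sketch for nodal fibres is a genuine addition rather than a deviation.
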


In addition, a similar idea of \cite[Lemma 2.4.1]{HLN18} allows us to have one more isomorphism.
\begin{lemm} \label{oneiso}
The canonical monomorphisms induce an isomorphism
\begin{align*}
& \oplus^d_{i=1} \pi_* \cO_{\cC}(\cD_i + \cA_1 + \cA_2 - \cB) 
\ \cong\ \pi_* \cO_{\cC}(\cD_1 + \cdots + \cD_d + \cA_1 + \cA_2 - \cB).
\end{align*}
\end{lemm}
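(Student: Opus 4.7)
The plan is to deduce the stated isomorphism from two tautological short exact sequences on the universal curve together with a cohomology vanishing for an auxiliary degree one line bundle. First, for each $i$, consider the short exact sequence of $\cO_{\cC}$-modules
\[ 0 \to \cO_{\cC}(\cA_1 + \cA_2 - \cB) \to \cO_{\cC}(\cD_i + \cA_1 + \cA_2 - \cB) \to \cO_{\cD_i}(\cD_i + \cA_1 + \cA_2 - \cB) \to 0, \]
and the analogous one for the total divisor $\cD_1 + \cdots + \cD_d$:
\[ 0 \to \cO_{\cC}(\cA_1 + \cA_2 - \cB) \to \cO_{\cC}(\cD_1 + \cdots + \cD_d + \cA_1 + \cA_2 - \cB) \to \bigoplus_{i=1}^d \cO_{\cD_i}(\cD_i + \cA_1 + \cA_2 - \cB) \to 0. \]
The decomposition of the rightmost term uses that $\cD_1, \dots, \cD_d$ are mutually disjoint (by Lemma \ref{divisor splitting}), so the canonical section $s_{\cD_j}$ gives a trivialization $\cO_{\cC}(\cD_j)|_{\cD_i}\cong \cO_{\cD_i}$ for $j\neq i$, and consequently $\cO_{\cD_i}(\cD_1+\cdots+\cD_d + \cA_1+\cA_2-\cB)\cong \cO_{\cD_i}(\cD_i + \cA_1+\cA_2-\cB)$.

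The key technical step is to verify, étale locally on $\fM^{div}$ (possibly after further shrinking), the vanishing
\[ R\pi_* \cO_{\cC}(\cA_1 + \cA_2 - \cB) \; = \; 0. \]
Fiberwise this says that the degree $1$ line bundle $\cO(\cA_1 + \cA_2 - \cB)$ is non-special on the genus $2$ prestable curve: by Riemann--Roch together with Serre duality, $h^0 = h^1 = 0$ as soon as neither $\cA_1+\cA_2-\cB$ nor $\omega_{\cC/\fM^{div}}-\cA_1-\cA_2+\cB$ is linearly equivalent to an effective divisor. As in \cite[Lemma 2.4.1]{HLN18}, this reduces to a case-by-case check along the strata of $\overline{M}_2$ (smooth, irreducible nodal, and two elliptic components glued at a node), and this is the main obstacle; the freedom in choosing $\cA_1, \cA_2, \cB$ on the minimal genus $2$ subcurve (as in the proof of Lemma \ref{divisor splitting}) is precisely what makes the vanishing attainable.

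Granted the vanishing, applying $R\pi_*$ to the two sequences above produces the isomorphisms
\[ \pi_* \cO_{\cC}(\cD_i + \cA_1 + \cA_2 - \cB) \;\cong\; \cO_{\cD_i}(\cD_i + \cA_1 + \cA_2 - \cB) \]
and
\[ \pi_* \cO_{\cC}(\cD_1 + \cdots + \cD_d + \cA_1 + \cA_2 - \cB) \;\cong\; \bigoplus_{i=1}^d \cO_{\cD_i}(\cD_i + \cA_1 + \cA_2 - \cB). \]
To finish, one checks that under these identifications the canonical monomorphism $\pi_* \cO_{\cC}(\cD_i + \cA_1 + \cA_2 - \cB) \hookrightarrow \pi_* \cO_{\cC}(\cD_1 + \cdots + \cD_d + \cA_1 + \cA_2 - \cB)$, given by multiplication by $\prod_{j \neq i} s_{\cD_j}$, lands in the $i$-th direct summand (since $\prod_{j\neq i} s_{\cD_j}$ vanishes on $\cD_k$ for $k \neq i$) and is, on that summand, the above iso twisted by the nowhere-vanishing restriction $\prod_{j \neq i} s_{\cD_j}|_{\cD_i}$. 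Summing over $i$ then yields the claimed isomorphism.
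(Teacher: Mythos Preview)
Your argument is correct and is precisely the approach the paper defers to \cite[Lemma 2.4.1]{HLN18}: the paper does not spell out a proof of this lemma, and your two short exact sequences together with the vanishing $R\pi_*\cO_{\cC}(\cA_1+\cA_2-\cB)=0$ (the choice of $\cA_1,\cA_2,\cB$ in \cite[Section 2.3]{HLN18} is made exactly to guarantee this) are what that reference establishes. One small refinement: with the trivialisations $\cO_{\cC}(\cD_j)|_{\cD_i}\cong\cO_{\cD_i}$ by $s_{\cD_j}$ already built into your identification of the quotient, the composite in the $i$-th summand is literally $\sigma\mapsto\sigma|_{\cD_i}$ (the extra factor $\prod_{j\neq i}s_{\cD_j}|_{\cD_i}$ cancels), so the summed map is the identity and the conclusion follows.
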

Combining all these Lemma \ref{divisor splitting}, Corollary \ref{twoisos} and Lemma \ref{oneiso}, we observe that $R\pi_* \cO_{\cC}(\cD)$ is quasi-isomorphic to
\begin{align}\label{ess}
\left[ \xymatrix@C=15mm{\oplus^d_{i=1} \pi_* \cO_{\cC}(\cD_i + \cA_1 + \cA_2 - \cB) \ar[r]^-{\tiny{\mathrm{ev}_{\cA_1}\oplus \; \mathrm{ev}_{\cA_2} }} & \cO_{\fM^{div}}^{\oplus 2}} \right] \oplus [\cO_{\fM^{div}} \stackrel{0}{\lra} \cO_{\fM^{div}}^{\oplus 2}].
\end{align}

\subsection{Diagonalisation of the local representative}\label{diagc}
Picking any local identification $\pi_* \cO_{\cC}(\cD_i + \cA_1 + \cA_2 - \cB) \cong \cO_{\fM^{div}}$, $\tiny{\mathrm{ev}_{\cA_1}\oplus \mathrm{ev}_{\cA_2}}$ in \eqref{ess} can be written as a $2 \times d$ matrix $(c_{ji})$, $c_{ji} \in \Gamma(\cO_{\fM^{div}})$.

The goal of this section is to transform the matrix $(c_{ji})$ to a nice diagonal form
$$
(c_{ji}) \sim \left(
\begin{array}{ccccc}
c_1 & 0 & 0 & \cdots & 0 \\
0 & c_2 & 0 & \cdots & 0
\end{array}
\right) =: c
$$
by using row and column operations. In fact it is already studied by Hu-Li-Niu \cite[Section 5]{HLN18}. They found a diagonal form $c$ on a neighborhood by fixing a point in $\fM^{div}$. The description of $c$ depends on a type of a boundary component in which the point is. We list some cases which will appear as a domain curve of a stable quasimap. 

\smallskip
\noindent (1). Near a point in the image of $\overline{M}_{1,1}\times \fM^{div}_{1,1,d} \hookrightarrow \fM^{div}_{2,0,d}$ one can find a diagonal matrix $c$ to be 
$$
c_1\ =\ 1, \ \ \ c_2\ =\ \zeta,
$$
where $\zeta$ is the node smoothing function in $\Gamma(\cO_{\fM^{div}})$. The proof comes directly from \cite[Section 5.3]{HLN18}. 
\smallskip

\noindent (2). Near a point in the image of $\iota_2 : \overline{M}_{1,1}\times \fM^{div}_{0,2,d}\times \overline{M}_{1,1} \xrightarrow{2:1} \fM^{div}_{2,0,d}$ one can find it to be
$$
c_1\ =\ \zeta_1, \ \ \ c_2\ =\ \zeta_2,
$$
where $\zeta_1$ and $\zeta_2$ are the node smoothing functions in $\Gamma(\cO_{\fM^{div}})$. The proof is in \cite[Section 5.5]{HLN18}. Note that the diagonal form in (1) is recovered by $\zeta_2\neq0$. 
\smallskip

\noindent (3). Near a point in the image of $\overline{M}_{1,2}\times \fM^{div}_{0,2,d} \hookrightarrow \fM^{div}_{2,0,d}$ we need a blowup to obtain a diagonal transform of $(c_{ji})$. Before we discuss it in the following Section, we introduce some useful facts which we will use.

The entries $c_{ji}$ in the matrix $(c_{ji})$ are non-vanishing functions by \cite[Section 5.4]{HLN18}. Therefore the matrix $(c_{ji})$ can be transformed to 
\begin{align}\label{mat3}
\left(
\begin{array}{cccccc}
1 & 0 & 0 & 0 & \cdots & 0 \\
0 & \det_{12} & \det_{13} & \det_{14} & \cdots & \det_{1d}
\end{array}
\right)
\end{align}
where $\det_{k\ell} := \det \left( \begin{array}{cc} 
c_{1k} & c_{1\ell} \\ 
c_{2k} & c_{2\ell} 
\end{array}
\right)$.
By \cite[Section 5.4]{HLN18} and \cite[Lemma 2.8.2]{HLN18}, we may assume that the first two determinants can be written as
\[
\det_{12} = \zeta_1 + a \cdot \zeta_2,\ \ \ \det_{13} = \zeta_2 + b \cdot \zeta_1,
\]
where $\zeta_1$ and $\zeta_2$ are the node smoothing functions.
\smallskip

\noindent (4). Near a generic domain curve from the reduced space, one can find it to be
$$
c_1\ =\ 1, \ \ \ c_2\ =\ 1.
$$
The proof is in \cite[Section 5.2]{HLN18}. 
This diagonal form is recovered from (1) by letting $\zeta\neq0$.

\subsection{Base change}\label{desin}
Consider the blowup spaces
$$
\wtil{\fM} := \Bl_{\fM_{1,2,0} \times \fM_{0,2,d}} \fM_{2,0,d} \ \textrm{ and } \ 
\widetilde{\fM}^{div} \ :=\ \fM^{div}\times_{\fM} \wtil{\fM}.
$$
On $\widetilde{\fM}^{div}$, the matrix \eqref{mat3} can be transformed to be a diagonal form.

Locally the boundary component $\fM_{1,2,0} \times \fM_{0,2,d}$ is $\{\zeta_1=\zeta_2=0\}$. Thus on a neighborhood of the exceptional divisor, we know either $\zeta_1|\zeta_2$ or $\zeta_2|\zeta_1$. Without loss of generality, we may assume that $\zeta_1|\zeta_2$. Then the matrix \eqref{mat3} can be transformed to 
$$
\left(
\begin{array}{ccccc}
1 & 0 & 0 & \cdots & 0 \\
0 & \zeta_1 & 0 & \cdots & 0
\end{array}
\right).
$$
Hence on the blowup, the matrix $c$ for the case (3) in Section \ref{diagc} has a form with $c_1=1$, $c_2=\zeta_1$. Furthermore, the diagonal form in (4) is recovered by this by $\zeta_1\neq 1$.

The global forgetful morphism $Q_p \to \fM$ defines the base change $b:\widetilde{Q}_p := Q_p \times_{\fM} \wtil{\fM}\to Q_p$. Then the pullbacks give rise to the cut-out model, perfect obstruction theory, and cosection so that the cosection localised virtual cycle
$$
[\widetilde{Q}_p]\virt _{\mathrm{loc}} \ \in\ A_{\mathrm{vdim}}(\widetilde{Q}_X)
$$ 
is defined. By \cite[Theorem 5.0.1]{Co06}, we have
$$
b_*[\widetilde{Q}_p]\virt_{\mathrm{loc}} \ =\ [Q_p]\virt_{\mathrm{loc}}.
$$

\subsection{Local cut-out model of $\widetilde{Q}_p$}\label{KuQt}
Recall that we obtained an explicit representative \eqref{ess} of $R\pi_*\cO(\cD)$ with the diagonal matrices $c$ in Section \ref{diagc} and Section \ref{desin} as its differential morphism. We emphasise once again that $\cD$ need not be the universal divisor, cf. Lemma \ref{divisor splitting}. So we apply these diagonalisations to get a local cut-out model not only of $Q(\PP^n)$, but also of the $p$-field space $\widetilde{Q}_p$, relative over $\wtil{\fM}^{div}$ as discussed in Section \ref{Ku}. The induced local defining equation \eqref{OOO} is
\begin{align}\label{KuModel}
\xymatrix@C=0mm@R=7mm{
\CC^{2n} \times \prod^m_{i=1} \left( \CC^2 \oplus \CC^{d\ell_i-1} \right) &\ni & (c_1(z)x_{1j}, c_2(z)x_{2j}) \times \prod_i \left( (c_1(z) p_{1i}, c_2(z)p_{2i}) , 0 \right) \\
\widetilde{\fM}^{div} \times \prod_{j=1}^{n} \left(\CC^{2} \times \CC^{d-1}\right) \times \CC^{2m} \ar[u]_{ c\; \circ \tau} & \ni & \{z\} \times ((x_{1j},x_{2j}))_{1\leq j \leq n} \times \{v\} \times ((p_{1i},p_{2i}))_{1 \leq i \leq m}. \ar@{|->}[u]_{ c\; \circ \tau}
}
\end{align}
We need some explanation.
The morphism $\prod_{j=1}^n\left(\CC^{2} \times \CC^{d-1}\right) \to \CC^{2n}$ above is represented by $R\pi_*\cO(\cD)^{\oplus n}$ and $\CC^{2m}\to \prod^m_{i=1} \left( \CC^2 \oplus \CC^{d\ell_i-1} \right)$ is represented by $\left(\oplus_{i}R\pi_*\cO(\ell_i\cdot\cD)[1]\right)^\vee$, whose direct sum defines a local perfect obstruction theory \eqref{localpot}.

\section{Perfect obstruction theories, cones and virtual cycles}

\subsection{Perfect obstruction theories}\label{LtoG}
Although the cut-out model \eqref{KuModel} is useful in computational aspects, there are also two crucial drawbacks. One is it is not global and the other is this does not give a cut-out model over $\wtil{\fM}^{line}$ since $\wtil{\fM}^{div}\to\wtil{\fM}^{line}$ is not smooth. For later use it is important how we can apply computations with the cut-out model \eqref{KuModel} to the perfect obstruction theory over $\wtil{\fM}$ or $\wtil{\fM}^{line}$. In this section, we explain this.

\smallskip
First we recall the perfect obstruction theories. The {\em local} relative perfect obstruction theory over $\wtil{\fM}^{div}$ is
\begin{align*}
\EE_{\wtil{Q}_p/ \wtil{\fM}^{div}} = \left(R\pi_*\cO_{\cC}(\cD)^{\oplus n} \right)^\vee\oplus \bigoplus_i R\pi_*\cO_{\cC}(\ell_i\cD) [1],
\end{align*}
which is just the pullback of \eqref{localpot}. Over $\wtil{\fM}^{line}$, $\wtil{Q}_p$ is equipped with the {\em global} perfect obstruction theory
$$
\EE_{\wtil{Q}_p/ \wtil{\fM}^{line}} = \left(R\pi_*\cL^{\oplus n+1} \right)^\vee\oplus \bigoplus_i R\pi_*\cL^{\ell_i} [1],
$$
where $\cL$ is the universal line bundle over the universal curve $\pi : \cC \to \wtil{Q}_p$. And over $\wtil{\fM}$ the cone of the composition
$$
\EE_{\wtil{Q}_p/ \wtil{\fM}^{line}}[-1]\ \longrightarrow\ \LL_{\wtil{Q}_p/ \wtil{\fM}^{line}}[-1]\ \longrightarrow\ \LL_{\wtil{\fM}^{line}/\wtil{\fM}}|_{\wtil{Q}_p}
$$
defines the {\em global} perfect obstruction theory $\EE_{\wtil{Q}_p/ \wtil{\fM}}$. Here $\LL$ denotes the cotangent complex. Then we have the following diagram of triangles
$$
\xymatrix@C=4mm@R=4mm{
\EE_{\wtil{Q}_p/ \wtil{\fM}} \ar[r]\ar@{=}[d] & \EE_{\wtil{Q}_p/ \wtil{\fM}^{line}}\ar[r]\ar[d] & \LL_{\wtil{\fM}^{line}/\wtil{\fM}}|_{\wtil{Q}_p}[1] \ar[d] \\
\EE_{\wtil{Q}_p/ \wtil{\fM}} \ar[r] & \EE_{\wtil{Q}_p/ \wtil{\fM}^{div}}\ar[r]\ar[d] & \LL_{\wtil{\fM}^{div}/\wtil{\fM}}|_{\wtil{Q}_p}[1]\ar[d]\\
&\LL_{\wtil{\fM}^{div}/\wtil{\fM}^{line}}|_{\wtil{Q}_p}[1] \ar@{=}[r] & \LL_{\wtil{\fM}^{div}/\wtil{\fM}^{line}}|_{\wtil{Q}_p}[1].
}
$$
In particular the middle horizontal triangle tells us that the local cut-out model \eqref{KuModel} defines $\EE_{\wtil{Q}_p/ \wtil{\fM}}$ as well as $\EE_{\wtil{Q}_p/ \wtil{\fM}^{div}}$ since $\wtil{\fM}^{div}\to\wtil{\fM}$ is smooth.\footnote{Beware that the local model \eqref{KuModel} does not define $\EE_{\wtil{Q}_p/\wtil{\fM}^{line}}$ immediately because $\wtil{\fM}^{div}\to \wtil{\fM}^{line}$ is not smooth.}

So one way from local to global is to consider this forgetful morphism $\wtil{\fM}^{div} \to \wtil{\fM}$. Via the morphism of perfect obstruction theories
$$
\EE_{\wtil{Q}_p/\wtil{\fM}}\ \longrightarrow\ \EE_{\wtil{Q}_p/\wtil{\fM}^{div}},
$$
computations can move from one to the other, where the former is global whereas the latter is local. For instance the smoothness shows that the two intrinsic normal cones 
\begin{align*}
\fC_{\wtil{Q}_p / \wtil{\fM}^{div}} \ \text{ and }\ \fC_{\wtil{Q}_p / \wtil{\fM}}
\end{align*} 
are related, the former maps to the latter via the morphism of bundle stacks
\begin{align*}
h^1/h^0\left(\EE_{\wtil{Q}_p/\wtil{\fM}^{div}}\dual\right)\ \longrightarrow\ h^1/h^0\left(\EE_{\wtil{Q}_p/\wtil{\fM}}\dual\right),
\end{align*}
which is actually an affine $T_{\wtil{\fM}^{div}/\wtil{\fM}}$-bundle. The precise proof is in \cite[Proposition 3]{KKP03}, but it is more or less obvious thanks to the smoothness. Then the local computation of the cone on the LHS using the cut-out model \eqref{KuModel} will give the computation of the cone on the RHS.

\medskip

A solution to $\wtil{\fM}^{line}$ is to consider the forgetful morphism $\wtil{\fM}^{line} \to \wtil{\fM}$. Since it is smooth as well the morphism of perfect obstruction theories
$$
\EE_{\wtil{Q}_p/\wtil{\fM}}\ \longrightarrow\ \EE_{\wtil{Q}_p/\wtil{\fM}^{line}},
$$
induces the relationship of the two intrinsic normal cones
\begin{align*}
\fC_{ \wtil{Q}_p / \wtil{\fM}^{line}}\ \text{ and }\ \fC_{\wtil{Q}_p / \wtil{\fM}},
\end{align*} 
namely, the former maps to the latter via the morphism of bundle stacks
\begin{align*}
h^1/h^0\left(\EE_{\wtil{Q}_p/\wtil{\fM}^{line}}\dual\right)\ \longrightarrow\ h^1/h^0\left(\EE_{\wtil{Q}_p/\wtil{\fM}}\dual\right)
\end{align*}
as before. It is also an affine $T_{\wtil{\fM}^{line}/\wtil{\fM}}$-bundle, and so is $\fC_{ \wtil{Q}_p / \wtil{\fM}^{line}}$ over $\fC_{\wtil{Q}_p / \wtil{\fM}}$.

\subsection{Virtual cycles} \label{virdecomp}
As it is briefly explained in Section \ref{Sec1}, the space $\widetilde{Q}_p$ is decomposed into four irreducible components
\begin{align}\label{qdecomp}
\widetilde{Q}_p \ =\ \wtil{Q}_p^{\mathrm{red}} \ \cup\ \wtil{Q}_p^{(1)}  \ \cup\ \wtil{Q}_p^{(2)}  \ \cup\ \wtil{Q}_p^{(3)},
\end{align}
cf. the pictures in Introduction. From the local cut-out model \eqref{KuModel} relative over $\wtil{\fM}^{div}$, an \'etale local neighborhood of $\widetilde{Q}_p$ is the spectrum of a ring
$$
R\ :=\  B[x,p]\; /(c_1x_{1j}, c_2x_{2j},c_1p_{1i}, c_2p_{2i}),
$$
where $\Spec(B)$ is a smooth neighborhood of $\wtil{\fM}^{div}$. From this we can read the decomposition \eqref{qdecomp} as follows:

\smallskip
\noindent
(1). Near a point in $\widetilde{Q}_p$ whose domain curve is an element in the image of $\overline{M}_{1,1}\times \fM_{1,1} \hookrightarrow \fM_{2,0}$ but not in the case (2) below, we have seen $c_2=1$ in Section \ref{diagc}. Hence
$$
\wtil{Q}_p^{(1)}= \{c_1=x_2=p_2=0\}, \ \ \wtil{Q}_p^{\mathrm{red}} = \{x=p=0\}.
$$
\smallskip
\noindent
(2). Near that in the image of $\overline{M}_{1,1}\times \fM_{0,2}\times \overline{M}_{1,1} \xrightarrow{2:1} \fM_{2,0}$, we have
\begin{align*}
&\wtil{Q}_p^{(2)} = \{c_1=c_2=0\}, \ \ \wtil{Q}_p^{\mathrm{red}} = \{x=p=0\}, \\
\text{and a double }&\text{cover }\ \{c_1=x_2=p_2=0\}\cup \{c_2=x_1=p_1=0\} \ \longrightarrow\ \wtil{Q}_p^{(1)}.
\end{align*}
Note that $\wtil{Q}_p^{(2)}$ does not meet $\wtil{Q}_p^{(3)}$.

\smallskip
\noindent
(3). Near that in the image of $\overline{M}_{1,2}\times \fM_{0,2} \hookrightarrow \fM_{2,0}$ but not in the case (2) above, we have $c_2=1$. So
$$
\wtil{Q}_p^{(3)}= \{c_1=x_2=p_2=0\}, \ \ \wtil{Q}_p^{\mathrm{red}} = \{x=p=0\}.
$$
\smallskip
\noindent
(4). Near a point outside there, we have $c_1=c_2=1$. Thus $\Spec(R)$ defines $\wtil{Q}_p^{\mathrm{red}}$.


Then the intrinsic normal cone $\fC_{\widetilde{Q}_p / \wtil{\fM}}$ can be decomposed into
\begin{align}\label{others}
\fC_{\widetilde{Q}_p / \wtil{\fM}} \ =\ \fC^{\mathrm{red}}\ \cup\ \fC^{(1)}\ \cup\ \fC^{(2)}\ \cup\ \fC^{(3)} \ \cup\ \text{others},
\end{align}
each of the first four terms is defined to be the closure of the complement open part in $\fC_{\widetilde{Q}_p / \wtil{\fM}}$. For instance, 
$$
\fC^{\mathrm{red}}\ \text{ is the closure of }\ \fC_{\widetilde{Q}_p / \wtil{\fM}} |_{\widetilde{Q}_p \setminus \wtil{Q}_p^{(1)} \cup \wtil{Q}_p^{(2)} \cup \wtil{Q}_p^{(3)}}\ \subset\ \fC_{\widetilde{Q}_p / \wtil{\fM}}.
$$ 
They are actually the closures in $h^1/h^0\left(\EE_{\wtil{Q}_p/\wtil{\fM}}\dual\right)$ since $\fC_{\widetilde{Q}_p / \wtil{\fM}}\subset h^1/h^0\left(\EE_{\wtil{Q}_p/\wtil{\fM}}\dual\right)$ is a closed substack.
In fact, one can check from the cut-out model \eqref{KuModel} that `others' in \eqref{others} is empty so that we obtain a decomposition
\begin{align}\label{Cdec}
\fC_{\widetilde{Q}_p / \wtil{\fM}} \ =\ \fC^{\mathrm{red}}\ \cup\ \fC^{(1)}\ \cup\ \fC^{(2)}\ \cup\ \fC^{(3)}.
\end{align}
Here is a brief explanation. Letting $A:=B[x,p]$, one can read the decomposition of $C_{R/A}:=C_{\Spec R/\Spec A}$, a pullback of $\fC_{\widetilde{Q}_p / \wtil{\fM}^{div}}$, from its spectrum of
\begin{align}\label{CRS}
\frac{R\; [X_{1j},X_{2j},P_{1i},P_{2i}]}{\left(
\begin{array}{c}
x_{1k}X_{1l}-x_{1l}X_{1k},\ x_{1k}P_{1l}-p_{1l}X_{1k},\ p_{1k}P_{1l}-p_{1l}P_{1k}, \\
x_{2k}X_{2l}-x_{2l}X_{2k},\ x_{2k}P_{2l}-p_{2l}X_{2k},\ p_{2k}P_{2l}-p_{2l}P_{2k}
\end{array}
\right).}
\end{align}
\smallskip
\noindent
(1). Near a point over $\wtil{Q}^{(1)}_p$ ($c_2=1$, $x_2=p_2=0$), $C_{R/A}$ is decomposed into
$$
C^{\;\!(1)}= \{c_1=x_2=p_2=0\}, \ \ C^{\;\!\mathrm{red}} = \{x=p=0\}.
$$
\smallskip
\noindent
(2). Near a point over $\wtil{Q}^{(2)}_p$, $C_{R/A}$ is decomposed into
\begin{align*}
&C^{\;\!(2)} = \{c_1=c_2=0\}, \ \ C^{\;\!(1)} = \{c_1=x_2=p_2=0\}\cup \{c_2=x_1=p_1=0\}, \ \ C^{\;\!\mathrm{red}} = \{x=p=0\}.
\end{align*}
\smallskip
\noindent
(3). Near a point over $\wtil{Q}^{(3)}_p$ ($c_2=1$, $x_2=p_2=0$), $C_{R/A}$ is decomposed into
$$
C^{\;\!(3)}= \{c_1=x_2=p_2=0\}, \ \ C^{\;\!\mathrm{red}} = \{x=p=0\}.
$$
\smallskip
\noindent
(4). Near a point over $\wtil{Q}^{\mathrm{red}}_p$ ($c_2=1$, $x_2=p_2=0$), $C_{R/A}$ is $C^{\;\!\mathrm{red}} = \{x=p=0\}$.

\smallskip
\noindent
So we could check there is no `others' in $\fC_{\widetilde{Q}_p / \wtil{\fM}^{div}}$. Combining this with the (local) equivalence of
$$
\fC_{ \wtil{Q}_p / \wtil{\fM} }\ \text{ and }\ \fC_{\wtil{Q}_p / \wtil{\fM}^{div}}  \ = \   [C_{R/A} /T_{A/B}|_R ]
$$
discussed in Section \ref{LtoG} gives the decomposition \eqref{Cdec}.

Note that the cut-out model \eqref{KuModel} tells us the morphism $d(c \circ \tau):T_A|_R \to C_{R/A}$ (defining the quotient via the composition $T_{A/B}\to T_A$) is
\begin{align} \label{normal}
\partial_{x_1}, \partial_{x_2}, \partial_{p_1}, \partial_{p_2} \ & \longmapsto \ c_1\partial_{X_1}, c_2\partial_{X_2}, c_1\partial_{P_1}, c_2\partial_{P_2}, \nonumber \\ 
\partial_{c_1}, \partial_{c_2} \ & \longmapsto \ \sum_jx_{1j}\partial_{X_{1j}} + \sum_ip_{1i}\partial_{P_{1i}}, \sum_jx_{2j}\partial_{X_{2j}} + \sum_ip_{2i}\partial_{P_{2i}}.
\end{align}

The cosection introduced in \cite{CL12} defining the localised virtual cycle $[Q_p]\virt_{\loc}$ mentioned in \eqref{locQp} is indeed defined on the obstruction sheaf $h^1\left(\EE^\vee_{Q_p/\fM^{line}}\right)$ {\em over} $\fM^{line}$. So this gives a morphism $\EE^\vee_{Q_p/\fM^{line}}\to \cO_{Q_p}[-1]$ in the derived category. It is proven in \cite{CL12} that this actually factors through the absolute dual perfect obstruction theory $\EE^\vee_{Q_p}\to \cO_{Q_p}[-1]$. So its pullback defines cosections on both obstruction sheaves over $\wtil{\fM}$ and $\wtil{\fM}^{div}$, $h^1\left(\EE_{\wtil{Q}_p/\wtil{\fM}}\right)$ and $h^1\left(\EE_{\wtil{Q}_p/\wtil{\fM}^{div}}\right)$. The latter is
\begin{align}\label{COsect}
\CC^{2n}\times\CC^{2m} \ &\longrightarrow\ \CC, \\
(X_{1j}, X_{2j}, P_{1i}, P_{2i}) \ & \longmapsto \ \sum_{i,j,k}\left(p_{ki}\frac{df_i}{x_{kj}}(x_k)\cdot X_{kj} -\deg f_i \cdot f_i(x_k)\cdot P_{ki}\right)\nonumber
\end{align}
in the cut-out model \eqref{KuModel}. Here we used the restriction $\CC^{2n}\times\CC^{2m}\subset \CC^{2n} \times \prod^m_{i=1} \left( \CC^2 \oplus \CC^{d\ell_i-1} \right)$ of the obstruction bundle. It is easy to check that the composition with $c \circ \tau$ is zero. Since the cut-out model \eqref{KuModel} defines actually the absolute perfect obstruction theory, it gives another simple proof that the cosection descends to the obstruction sheaf of the absolute perfect obstruction theory. 

\medskip
For Definition below, we use the perfect obstruction theory {\em over} $\wtil{\fM}$. Especially we use the decomposition \ref{Cdec}.
\begin{defi}\label{CYCLES}
The virtual cycle of the reduced part
$$
[\wtil{Q}^{\mathrm{red}}_p]\virt \ \in \ A_{\mathrm{vdim}} (\widetilde{Q}_X \cap \wtil{Q}^{\mathrm{red}}_p)
$$ 
is defined by the image of $[\fC^{\mathrm{red}}]$ by the cosection localised Gysin map \cite{KL13}. The cycles $[\wtil{Q}^{(1)}_p]\virt$, $[\wtil{Q}^{(2)}_p]\virt$ and $[\wtil{Q}^{(3)}_p]\virt$ are similarly defined by using $[\fC^{(1)}]$, $[\fC^{(2)}]$ and $[\fC^{(3)}]$ respectively.
\end{defi}

Hence we obtain a decomposition of the virtual class
\begin{align*}
[\widetilde{Q}_{p}]\virt_{\mathrm{loc}} \ =\ [\wtil{Q}_p^{\mathrm{red}}]\virt \ +\ [\wtil{Q}_p^{(1)}]\virt \ +\ [\wtil{Q}_p^{(2)}]\virt \ +\ [\wtil{Q}_p^{(3)}]\virt
\end{align*}
providing \eqref{Qdec} by the pushdown.

\section{Quantum Lefschetz property for the reduced virtual cycle} \label{reduced}
As we have explained in Section \ref{LtoG}, the cone $\fC_{\wtil{Q}_p / \wtil{\fM}^{line}}$ is an affine bundle over $\fC_{\wtil{Q}_p / \wtil{\fM}}$. Hence the decomposition \eqref{Cdec} of $\fC_{\wtil{Q}_p / \wtil{\fM}}$ provides a decomposition of $\fC_{\widetilde{Q}_p / \wtil{\fM}^{line}}$, by abuse of notation,
\begin{align}\label{Cdec2}
\fC_{\widetilde{Q}_p / \wtil{\fM}^{line}} \ =\ \fC^{\mathrm{red}}\ \cup\ \fC^{(1)}\ \cup\ \fC^{(2)}\ \cup\ \fC^{(3)}.
\end{align}
The functorial property of localised Gysin homomorphisms \cite{KL13} tells us that the cycle $[\wtil{Q}_p^\red]\virt$ defined by using the subcone of $\fC_{\wtil{Q}_p / \wtil{\fM}}$ is the same as the one defined by using $\fC^{\mathrm{red}}$, a subcone of $\fC_{\widetilde{Q}_p / \wtil{\fM}^{line}}$ in \eqref{Cdec2}. We will compute the latter one in this section. 

The bundle $V:=V_{2,0,d}$ in Theorems \ref{QLP1} and \ref{QLP} is precisely defined to be
$$
V\ :=\ \oplus_i \pi_*\cL^{\otimes \ell_i}\ =\ h^0\left(\EE_{\wtil{Q}_p/\wtil{Q}(\PP^n)}[-1]|_{\wtil{Q}_p^{\mathrm{red}}}\right),
$$
where $\cL$ is the universal line bundle on the universal curve $\pi:\cC\to \wtil{Q}^{\red}_p$, and obviously $\wtil{Q}(\PP^n)$ is the base change $Q_{2,0,d}(\PP^n)\times_{\fM}\wtil{\fM}$.
Each point in $\wtil{Q}_p^{\mathrm{red}}$ has a section data $u \in \Gamma(C, L^{\oplus n+1})$ via the morphism $\wtil{Q}_p^{\mathrm{red}} \hra \wtil{Q}_p$. Then the morphism
$$
\EE^{\vee}_{\wtil{Q}(\PP^n)/\wtil{\fM}^{line}}=R\pi_*\cL^{\oplus n+1}\ \longrightarrow\ \EE_{\wtil{Q}_p/\wtil{Q}(\PP^n)}[-1]=\oplus_i R\pi_*\cL^{\otimes \ell_i}
$$
induced by $f_1, ..., f_m$ takes the universal section $u$ to a section $(f_i(u))_{1\leq i \leq m} \in \Gamma\left(V\right)$ in $h^0$. It defines the refined Euler class $e^{\mathrm{ref}}(V)$ for Theorems \ref{QLP1} and \ref{QLP}. 

Note that $(-1)^{\mathrm{rank}V}=(-1)^{d(\sum_i \ell_i) - m}$ and $\wtil{Q}^{\mathrm{red}}_p = \wtil{Q}^{\mathrm{red}}(\PP^n)$, which is smooth.
\begin{prop}\label{prop4.1}The reduced virtual cycle satisfies the original quantum Lefschetz formula \eqref{naiveQLP1}
$$
[\wtil{Q}^{\mathrm{red}}_p]\virt\ =\ (-1)^{d(\sum_i \ell_i) - m}e^{\mathrm{ref}}(V) \ \cap\ [\wtil{Q}^{\mathrm{red}}(\PP^n)].
$$
\end{prop}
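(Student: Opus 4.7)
The plan is to exploit the smoothness of $\wtil{Q}^{\mathrm{red}}_p = \wtil{Q}^{\mathrm{red}}(\PP^n)$ and reduce the statement to a classical refined Euler class computation on this smooth ambient space. First I will identify the subcone $\fC^{\mathrm{red}}$ of the intrinsic normal cone. On the dense open substack of $\wtil{Q}^{\mathrm{red}}_p$ where the underlying curve is smooth, the hypothesis $d \geq 3$ forces $R^1\pi_*\cL = 0$ and $R^1\pi_*\cL^{\ell_i} = 0$ (since $d\ell_i \geq 3 > 2g-2 = 2$), so $\wtil{Q}_p$ coincides with $\wtil{Q}(\PP^n)$ there, and the composition $\wtil{Q}_p \to \wtil{\fM}^{line} \to \wtil{\fM}$ is smooth. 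Thus the intrinsic normal cone is the zero section of the bundle stack $h^1/h^0(\EE^\vee_{\wtil{Q}_p/\wtil{\fM}})$ on this open locus; taking the closure in the bundle stack over the smooth stack $\wtil{Q}^{\mathrm{red}}_p$ shows that $\fC^{\mathrm{red}}$ is globally the zero section.

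Next I will identify the restricted obstruction bundle and the descended cosection. From the cut-out model \eqref{KuModel} applied in Case~(4) of Section~\ref{diagc} (where $c_1 = c_2 = 1$), a direct computation of the cokernel of $d \circ \tau$ along $\wtil{Q}^{\mathrm{red}}_p$ produces the rank-$(d\sum_i \ell_i - m)$ bundle $V^\vee = \oplus_i (\pi_* \cL^{\ell_i})^\vee$. The Chang--Li cosection \eqref{COsect}, by its factorization through the absolute dual obstruction theory, descends to a morphism $V^\vee \to \cO$ on $\wtil{Q}^{\mathrm{red}}_p$; under the canonical identification $V^\vee \cong \Hom(V, \cO)$, this descended cosection corresponds to the global section $s := (f_i(u))_i \in \Gamma(\wtil{Q}^{\mathrm{red}}(\PP^n), V)$, whose vanishing scheme is $\wtil{Q}(X) \cap \wtil{Q}^{\mathrm{red}}(\PP^n)$.

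With these identifications in place, the cosection-localized Gysin map applied to the zero section $[\fC^{\mathrm{red}}] = [\wtil{Q}^{\mathrm{red}}(\PP^n)]$ of the bundle $V^\vee$ equipped with a cosection $\sigma$ corresponding to a section $s$ of $V$ collapses to a Fulton-style refined Euler class:
$$
0^!_{V^\vee,\sigma}\,[0_{V^\vee}] \;=\; (-1)^{\rank V}\, e^{\mathrm{ref}}(V, s) \cap [\wtil{Q}^{\mathrm{red}}(\PP^n)],
$$
where the sign $(-1)^{\rank V}$ comes from $e(V^\vee) = (-1)^{\rank V} e(V)$ together with $\rank V = d\sum_i \ell_i - m$ in genus~$2$. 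The main obstacle will be the second step above: carefully checking that the locally written cosection \eqref{COsect}---whose coefficient functions $x_{kj}, p_{ki}$ vanish at reduced points---nevertheless descends globally to the nontrivial Serre-dual pairing with $(f_i(u))$ on $V^\vee$, which requires tracing the lift of \eqref{COsect} to the absolute obstruction theory and reconciling it with the local representative coming from \eqref{KuModel}.
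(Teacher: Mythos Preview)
Your overall strategy---identify $\fC^{\mathrm{red}}$ as the zero section of the obstruction bundle stack over the smooth space $\wtil{Q}^{\mathrm{red}}_p=\wtil{Q}^{\mathrm{red}}(\PP^n)$ and then collapse the cosection-localised Gysin map to a refined Euler class---is the right idea, and your first step (that $\fC^{\mathrm{red}}$ is the closure of the zero section because $\wtil{Q}^{\mathrm{red}}_p\to\wtil{\fM}$ is smooth on the open locus) is fine. The gap is in your second step. You write that ``a direct computation of the cokernel of $d\circ\tau$ along $\wtil{Q}^{\mathrm{red}}_p$ produces the rank-$(d\sum_i\ell_i-m)$ bundle $V^\vee$,'' invoking Case~(4) of Section~\ref{diagc}. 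But Case~(4) is only the generic case $c_1=c_2=1$. Near the boundary components intersecting $\wtil{Q}^{\mathrm{red}}_p$ (Cases (1)--(3) restricted to $\{x=p=0\}$), the differential $d(c\circ\tau)$ from \eqref{normal} sends $\partial_{x_{kj}}\mapsto c_k\partial_{X_{kj}}$ and $\partial_{p_{ki}}\mapsto c_k\partial_{P_{ki}}$, and these drop rank where $c_1$ or $c_2$ vanishes. So the obstruction sheaf $h^1(\EE^\vee_{\wtil{Q}_p/\wtil{\fM}}|_{\wtil{Q}^{\mathrm{red}}_p})$ acquires torsion there; it is \emph{not} globally the locally free sheaf $V^\vee$. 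Your reduction to $0^!_{V^\vee,\sigma}[0_{V^\vee}]$ therefore does not go through as stated.

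The paper deals with exactly this point, but by a different route. It passes to the perfect obstruction theory \emph{relative to} $\wtil{Q}(\PP^n)$ (equivalently, over $\wtil{\fM}^{line}$ via the discussion at the start of Section~\ref{reduced}), so that only the $p$-field part of the obstruction remains. The resulting cut-out model $g:(u,p)\mapsto(c_1p_1,c_2p_2,0)$ still has a non-free cokernel, and the paper observes that $V$ is only the \emph{dual torsion-free part} of $\mathrm{coker}\,dg$. The substantive step is then to compare $g$ with the modified model $g':(u,p)\mapsto(p_1,p_2,0)$, whose cokernel is exactly $V^\vee$, and to check via \eqref{CRS} that both models produce the same cone $\fC^{\mathrm{red}}$. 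This is the argument you are missing. (Incidentally, the descended cosection is $(-\deg f_i\cdot f_i(u))_i$ rather than $(f_i(u))_i$; the paper notes that a deformation between them leaves the zero locus and hence $e^{\mathrm{ref}}$ unchanged, so this does not affect the conclusion---but your ``main obstacle'' paragraph is aimed at a secondary issue while the real work lies in handling the torsion.)
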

\begin{proof}
The cut-out morphism \eqref{KuModel} restricted to $\wtil{Q}^{\mathrm{red}}_{p}\times \CC^{2m}$ mapping to $\prod^m_{i=1} \left( \CC^2 \oplus \CC^{d\ell_i-1} \right)$ gives rise to a cut-out model defining the perfect obstruction theory $\EE_{\wtil{Q}_p/\wtil{Q}(\PP^n)}|_{\wtil{Q}_p^{\mathrm{red}}}$. This morphism is precisely 
\begin{align*}
g\ :\ \wtil{Q}^{\mathrm{red}}_{p}\times \CC^{2m}\ \longrightarrow\ \prod^m_{i=1} \left( \CC^2 \oplus \CC^{d\ell_i-1} \right), \ \ u\times (p_{1i}, p_{2i})\ \longmapsto\ (c_1p_{1i}, c_2p_{2i}),
\end{align*}
and the dual $\left(dg|_{\wtil{Q}^{\mathrm{red}}_p}\right)^\vee$ defines $\EE_{\wtil{Q}_p/\wtil{Q}(\PP^n)}|_{\wtil{Q}_p^{\mathrm{red}}}$. So $V$ is the dual torsion-free part of the cokernel of $dg|_{\wtil{Q}^{\mathrm{red}}_p}$, which is the same as the cokernel of $dg'|_{\wtil{Q}^{\mathrm{red}}_p}$, where
\begin{align*}
g'\ :\ \wtil{Q}^{\mathrm{red}}_{p}\times \CC^{2m}\ \longrightarrow\ \prod^m_{i=1} \left( \CC^2 \oplus \CC^{d\ell_i-1} \right), \ \ u\times (p_{1i}, p_{2i})\ \longmapsto\ (p_{1i}, p_{2i}).
\end{align*}
Clearly the cut-out model $g'$ defines $(-1)^{d(\sum_i \ell_i) - m}e^{\mathrm{ref}}(V) \cap [\wtil{Q}^{\mathrm{red}}_{p}]$. Note that the dual of the cosection \eqref{COsect} is $(-\deg f_i\cdot f_i(u))_{1\leq i \leq m} \in \Gamma\left(V\right)$ which deforms to the defining section $(f_i(u))_{1\leq i \leq m} \in \Gamma\left(V\right)$ without changing the zero locus, defining $e^{\mathrm{ref}}(V)$. On the other hand, the normal cone defined by using the model $g'$ gives $\fC^{\mathrm{red}}$ in \eqref{Cdec2} through the computation using \eqref{CRS}. Hence it also defines $[\wtil{Q}^{\mathrm{red}}_p]\virt$.
\end{proof}

\section{Lower genus contributions from the rest cycles}\label{LOT}

\subsection{Cones in the obstruction bundle}\label{sect:conesinobs}
In this section we consider our space $\wtil{Q}_p$ {\em over $\wtil{\fM}$}, so use the perfect obstruction theory $\EE_{\wtil{Q}_p /\wtil{\fM}}$, the decomposition \eqref{Cdec} and Definition \ref{CYCLES} for virtual cycles. Letting
$$
A\ :=\ \widetilde{\fM}^{div} \times \prod_{j=1}^{n} \left(\CC^{2} \times \CC^{d-1}\right) \times \CC^{2m}
$$
be the local smooth space in the cut-out model \eqref{KuModel} having forgetful map $A\to \wtil{\fM}$, the dual perfect obstruction theory $\EE^{\vee}_{\wtil{Q}_p /\wtil{\fM}}$ is locally isomorphic to
$$
\left. \left[\, T_{A/\widetilde{\fM}} \ \xrightarrow{d(c \circ \tau)} \ \cO_A^{\oplus 2n} \oplus \bigoplus_i \cO_A^{\oplus d\ell_i +1} \, \right]  \right|_{\widetilde{Q}_p}. 
$$
Using this local expression we check that $h^{-1} \left(\EE_{\wtil{Q}_p / \wtil{\fM}}|_{\wtil{Q}_p^{(i)}} \right)$ is locally free. We denote its dual by $E^{(i)}$\footnote{This is not $h^{1}\left(\EE^\vee_{\wtil{Q}_p / \wtil{\fM}}|_{\wtil{Q}_p^{(i)}}\right)$, since it may not be locally free.}.

Picking any global locally free representative $[F_0 \stackrel{d}{\lra} F_1]$ of $\EE^\vee_{\wtil{Q}_p/\wtil{\fM}}$, we obtain a diagram
$$
\xymatrix@R=5mm{
& F_1 |_{\widetilde{Q}_p^{(i)}} \ar[d] \ar[r] & E^{(i)} \\
\fC^{(i)}\ \ar@{^(->}[r] &\ [F_1/F_0]|_{\widetilde{Q}_p^{(i)}} .
}
$$
Using this we define $C^{(i)}\hookrightarrow E^{(i)}$ to be the image of the pullback of $\fC^{(i)}\hookrightarrow [F_1/F_0]$. Then its image by the cosection localised Gysin map is $[\wtil{Q}_p^{(i)}]\virt$ by Definition \ref{CYCLES}. We denote this Kiem-Li's cosection localised Gysin map by $e^{\KL}(E^{(i)})$\footnote{This notation seems not too strange because it follows the properties of Euler classes since it is a bivariant class in rational coefficients \cite{KO18}.} so that
\begin{align}\label{a}
[\wtil{Q}_p^{(i)}]\virt \ =\ e^{\KL}(E^{(i)})\cap [C^{(i)}].
\end{align}

\medskip

\noindent Now, let us consider other intrinsic normal cones $\fC_{\widetilde{Q}_p^{(i)}/\wtil{\fM}^{(i)}}$, where $\wtil{\fM}^{(i)}\subset\wtil{\fM}$ is the image of
\begin{enumerate}
\item $\wtil{\fM}_{1,1,0} \times \wtil{\fM}_{1,1,d}$,
\item $\wtil{\fM}_{1,1,0} \times \wtil{\fM}_{0,2,d} \times \wtil{\fM}_{1,1,0}$,
\item $\wtil{\fM}_{1,2,0} \times \wtil{\fM}_{0,2,d}$,
\end{enumerate}
under the node-identifying morphism. These cones are the bundle stacks, zero sections of $h^1/h^0$ of the tangent complexes $\mathbb{T}_{\widetilde{Q}_p^{(i)}/\wtil{\fM} }$ because $\widetilde{Q}_p^{(i)} \to \wtil{\fM}^{(i)}$ is smooth. Meanwhile the morphism $\mathbb{T}_{\widetilde{Q}_p^{(i)}/\wtil{\fM}} \to \mathbb{T}_{\widetilde{Q}_p/\wtil{\fM}}|_{\widetilde{Q}_p^{(i)}}\to \EE^\vee_{\widetilde{Q}_p/\wtil{\fM}}|_{\widetilde{Q}_p^{(i)}}$ induces a representable morphism of bundle stacks
$$
h^1/h^0\left(\mathbb{T}_{\widetilde{Q}_p^{(i)}/\wtil{\fM}}\right)\ \longrightarrow\ [F_1/F_0]|_{\widetilde{Q}_p^{(i)}}.
$$
Then defining the cone
$$
C_{(i)}\ :=\ N_{\wtil{\fM}^{(i)}/\wtil{\fM}}|_{\wtil{Q}_p^{(i)}}, 
$$
its base change takes $C_{(i)}$ to
\begin{align}\label{eq:coneEmbed}
C_{(i)}=\ \fC_{\widetilde{Q}_p^{(i)}/\wtil{\fM}^{(i)}}\times_{\fC_{\widetilde{Q}_p^{(i)}/\wtil{\fM}^{(i)}}}N_{\wtil{\fM}^{(i)}/\wtil{\fM}}|_{\wtil{Q}_p^{(i)}}\ \longrightarrow\ \fC_{\widetilde{Q}_p^{(i)}/\wtil{\fM}^{(i)}}\times_{[F_1/F_0]} F_1\ \longrightarrow\ F_1|_{\widetilde{Q}_p^{(i)}}\ \longrightarrow\ E^{(i)}
\end{align}
through the above bundle stack morphism. Using \eqref{normal}, we see that the first arrow is locally 
\begin{enumerate}
\item $\partial_{c_1} \in C_{(1)}  \longmapsto d(c \circ \tau)(\partial_{c_1})$,
\item $\partial_{c_1}, \partial_{c_2} \in C_{(2)} \longmapsto d(c \circ \tau)(\partial_{c_1}), d(c \circ \tau)(\partial_{c_2})$,
\item $\partial_{c_1} \in C_{(3)} \longmapsto d(c \circ \tau)(\partial_{c_1})$.
\end{enumerate}
Since $d(c \circ \tau)(\partial_{c_1})$ annihilates the defining equations of $C^{(i)}$, for instance
$$
d(c \circ \tau)(\partial_{c_1}) (d(x_{1l}X_{1k}-x_{1k}X_{1l}))\ =\ 0,
$$
the morphism $C_{(i)} \to E^{(i)}$ factors through
$$
C_{(i)}\ \longrightarrow\ C^{(i)}\ \hookrightarrow\ E^{(i)}.
$$
In fact $C_{(i)}$ maps isomorphic to $C^{(i)}\subset E^{(i)}$ on which $d(c \circ \tau)(\partial_{c_j})$ does not vanish. Note that it vanishes on either $x_1=p_1=0$ or $x_2=p_2=0$. Since $C_{(i)}$ is a bundle we may expect an advantage of using $C_{(i)}$ instead of $C^{(i)}$ for \eqref{a} if this is possible in a certain way. But this is not an absurd fantasy since they are almost isomorphic.

\begin{exam}\label{EXAM1}
The local structure ring \eqref{CRS} tells us that $C^{(1)}$ is (locally) the spectrum of 
\begin{align*}
\frac{R/(x_2,p_2,c_1) \; [X_{1j},P_{1i}]}{\left(
\begin{array}{c}
x_{1k}X_{1l}-x_{1l}X_{1k},\ x_{1k}P_{1l}-p_{1l}X_{1k},\ p_{1k}P_{1l}-p_{1l}P_{1k}
\end{array}
\right).}
\end{align*}
Meanwhile by its definition $C_{(1)}$ is (locally) the spectrum of $R/(x_2,p_2,c_1)[Y]$, where the variable $Y$ is a coordinate of $\partial_{c_1}$. So the morphism $C_{(1)} \to C^{(1)}$ is
$$
X_{1j} \ \longmapsto\ x_{1j}Y, \ \ \ P_{1i} \ \longmapsto\ p_{1i}Y.
$$
\end{exam}

\subsection{Outline of the proof of Theorem \ref{QLP}}\label{OUTLINE}
Letting $p_1=p_2=0$, $\eqref{KuModel}$ gives a local cut-out model of $\wtil{Q}:=\wtil{Q}(\PP^n)$. The decomposition \eqref{qdecomp} of $\wtil{Q}_p$ then gives rise to the corresponding one of $\wtil{Q}$,
\begin{align}\label{QDEcomp}
\wtil{Q}\ =\ \wtil{Q}^{red}\ \cup\ \wtil{Q}^{(1)}\ \cup\ \wtil{Q}^{(2)}\ \cup\ \wtil{Q}^{(3)}.
\end{align}
Then the components $\wtil{Q}^{(i)}$ are the images of the following node-identifying morphisms
\begin{enumerate}
\item $\til{\iota}_{1} : \overline{M}_{1,1} \times Q^{\mathrm{red}}_{1,1,d} \hra \wtil{Q}$,
\item $\til{\iota}_{2} : \overline{M}_{1,1} \times Q_{0,2,d} \times \overline{M}_{1,1} \stackrel{2:1}{\lra} \wtil{Q}$,
\item $\til{\iota}_{3} : \overline{M}_{1,2} \times \PP Q'_{0,2,d} \hra \wtil{Q}$.
\end{enumerate}
In (3), $\PP Q'_{0,2,d}$ denotes the projectivisation of $\LL_1\dual \oplus \LL_2\dual$, sum of dual tautological line bundles over the locus $Q'_{0,2,d}\subset Q_{0,2,d}$ where $\mathrm{ev}_1=\mathrm{ev}_2$. The following Remark explains why $\wtil{Q}^{(3)}$ is the image of $\til{\iota}_{3}$.

\begin{rema}\label{333}
In fact, $\wtil{Q}^{(3)}$ should be (the image of) projectivisation of the pullback of 
$$
N_{\fM_{1,2,0}\times\fM_{0,2,d}/\fM_{2,0,d}}\ \cong\ (\LL\dual_{1} \boxtimes \LL\dual_{1}) \oplus (\LL\dual_{2} \boxtimes \LL\dual_{2})
$$ 
on $\overline{M}_{1,2} \times Q'_{0,2,d}$ since $\wtil{Q}$ is the base change of the blowup. It is equal to $\overline{M}_{1,2} \times \PP Q'_{0,2,d}$ if $\LL_1\cong\LL_2$ on $\overline{M}_{1,2}$. In \cite[pp.1221--1222]{Zi08}, Zinger proved that the evaluation morphism of the Hodge bundle $\cH\to \LL_j$ on $\overline{M}_{1,2}$ maps isomorphic to 
$$
\cH\ \xrightarrow{\ \sim\ }\ \LL_j(-D)\ \hookrightarrow\ \LL_j,
$$ 
where $D= \overline{M}_{1,1}\times\overline{M}_{0,3} \hookrightarrow \overline{M}_{1,2}$ is a boundary divisor of a collision of the two marked points. Thus we have $\LL_1 \cong \cH(D) \cong \LL_2.$
\end{rema}

\medskip
As we have mentioned in Section \ref{Sec1}, local computation with \eqref{KuModel} tells us that the $i$-th $p$-field space $\wtil{Q}_p^{(i)}$ is a vector bundle over $\wtil{Q}^{(i)}$,
$$
\wtil{Q}_p^{(i)}\ \cong\ h^0 \l( \left. \left(\oplus_{j=1}^m R\pi_* \left(\cL^{-\ell_j}\otimes \omega_{\cC_{\wtil{Q}}} \right)\right)\right|_{\wtil{Q}^{(i)}} \r).
$$
To avoid a confusion, we denote it by $P^{(i)}$ when we consider it as a bundle, but use $\wtil{Q}_p^{(i)}$ for the space. So the pullback of $P^{(i)}$ on $\wtil{Q}_p^{(i)}$ is the tautological bundle. On $\wtil{Q}_p^{(i)}$, the obstruction bundle $E^{(i)}$ was defined in Section \ref{sect:conesinobs}. Unlike {\em over} $\wtil{\fM}^{line}$, the decomposition $E^{(i)}=E_1^{(i)}\oplus E_2^{(i)}$, where 
$$
E_1^{(i)} = h^{-1} \left.\l( \EE_{\wtil{Q} / \wtil{\fM}}|_{\wtil{Q}^{(i)}} \r)\right|^\vee_{\wtil{Q}_p^{(i)}}, \ \ E_2^{(i)}= R^1\pi_* \l( \oplus_{i=1}^m \cL^{\otimes -\ell_i} \otimes \omega_{\cC} \r)\cong \pi_* \l( \oplus_{i=1}^m \cL^{\otimes \ell_i}\r)^\vee,
$$
of the obstruction bundle {\em over} $\wtil{\fM}$ is not so obvious, but it is proven in \cite[Equation (3.15)]{LL22}.

From now on for simplicity, we denote the domain of the morphism $\tilde{\iota}_i$ by $\bQ^{(i)}$, by $\bQ_p^{(i)}$ the fiber product $\bQ^{(i)}\times_{Q^{(i)}}Q^{(i)}_p$ and by $\bP^{(i)}$ the pullback of $P^{(i)}$. Explicitly,
\begin{enumerate}
\item $\bQ^{(1)} = \bar{M}_{1,1} \times Q_{1,1,d}^\red $,
\item $\bQ^{(2)} = \bar{M}_{1,1} \times Q_{0,2,d} \times \bar{M}_{1,1} $,
\item $\bQ^{(3)} = \bar{M}_{1,2} \times  \PP Q'_{0,2,d}$,
\end{enumerate}
and the bundle $\bP^{(i)}$ is
\begin{enumerate}
\item $\bP^{(1)} = \cH\, \boxtimes\,  \oplus_{i=1}^m \mathrm{ev}^* \cO_{\PP^n}(-\ell_i)$, 
\item $\bP^{(2)} = \left(\cH \boxtimes \oplus_{i=1}^m \mathrm{ev}_1^* \cO_{\PP^n}(-\ell_i) \boxtimes \cO_{\bar{M}_{1,1}} \right)\ \bigoplus \ \left(\cO_{\bar{M}_{1,1}} \boxtimes \oplus_{i=1}^m \mathrm{ev}_2^* \cO_{\PP^n}(-\ell_i) \boxtimes \cH \right)$,
\item $\bP^{(1)} = \cH\, \boxtimes\,  \oplus_{i=1}^m \mathrm{ev}_1^* \cO_{\PP^n}(-\ell_i)$.
\end{enumerate}
Recall that in (3), $\mathrm{ev}_1=\mathrm{ev}_2$. We denote by 
$$
\til{\iota}_{p,i}\ :\ \bQ_p^{(i)}\ \longrightarrow\ \wtil{Q}_p^{(i)}. 
$$
the base change of the node-identifying morphism $\wtil{\iota}_i$. and let $\bE^{(i)} := \til{\iota}_{p,i}^* E^{(i)}$. Then the decomposition $\bE^{(i)}=\bE_1^{(i)}\oplus \bE_2^{(i)}$ is
\begin{enumerate}
\item $\bE_1^{(1)} = \cH^\vee \boxtimes \mathrm{ev}^*T_{\PP^n} $, $\bE_2^{(1)} = \cO_{\bar{M}_{1,1}}\boxtimes(\oplus_i \pi_*\cL^{\otimes \ell_i})\dual$,
\item $\bE_1^{(2)} = \left(\cH^\vee \boxtimes \mathrm{ev}_1^*T_{\PP^n}\boxtimes \cO_{\bar{M}_{1,1}}\right) \oplus  \left(\cO_{\bar{M}_{1,1}} \boxtimes \mathrm{ev}_2^*T_{\PP^n}\boxtimes \cH^\vee\right)$, 

\noindent $\bE_2^{(2)} = \cO_{\bar{M}_{1,1}} \boxtimes \l(\oplus_i \pi_*\cL^{\otimes \ell_i} \r)\dual \boxtimes \cO_{\bar{M}_{1,1}}$,
\item $\bE_1^{(3)} =  \cH^\vee \boxtimes \mathrm{ev}_1^*T_{\PP^n} $, $\bE_2^{(3)} = \cO_{\bar{M}_{1,2}}\boxtimes (\oplus_i \pi_*\cL^{\otimes \ell_i})\dual$.
\end{enumerate}

Consider the pullback cosection $\sigma^{(i)} : \bE^{(i)} \to \cO_{\bQ^{(i)}_p}$, and decompose it into 
$$
\sigma_1^{(i)} : \bE_1^{(i)} \to \cO_{\bQ^{(i)}_p}\ \text{ and }\ \sigma_2^{(i)} : \bE_2^{(i)} \to \cO_{\bQ^{(i)}_p}
$$ 
accordingly. Using these cosections, we can define Kiem-Li's cosection localised Gysin maps $e^{\KL}(\bE^{(i)})$ and $e^{\KL}(\bE^{(i)}_j)$. Letting $\bold{C}^{(i)} := \til{\iota}_{p,i}^* C^{(i)}$, the multiplicative property of $e^{\KL}$ \cite[Theorem 3.2]{Oh18} tells us that \eqref{a} becomes
\begin{align}\label{eq:ideal}
[\wtil{Q}_p^{(i)}]\virt &= \frac{1}{\deg(\til{\iota}_{p,i})}(\til{\iota}_{p,i})_*\left(e^{\KL}(\bE^{(i)})\cap [\bC^{(i)}]\right)  \\
& = \frac{1}{\deg(\til{\iota}_{p,i})}(\til{\iota}_{p,i})_*\left( e^{\KL}(\bE_1^{(i)}) \cap e^{\KL}(\bE_2^{(i)}) \cap [\bC^{(i)}] \right). \nonumber
\end{align}

Since the cosection $\sigma^{(i)}_2$ on $\bE^{(i)}_2\cong\oplus_i \pi_*\cL^{\otimes \ell_i}$ is defined by the (dual of) defining equation $f$ as on $V$ in Section \ref{reduced}, the cycle $e^{\KL}(\bE_2^{(i)})\cap [\bC^{(i)}]$ is supported on the space $\bE_1^{(i)}  \times_{Q(\PP^n)} Q(X)$. Then the local computation \eqref{COsect} shows that the restriction of the cosection $\sigma^{(i)}_1$ defining $e^{\KL}(\bE_1^{(i)})$ to this locus $\bE_1^{(i)}  \times_{Q(\PP^n)} Q(X)$ is induced by the surjection
$$
df\ :\ T_{\PP^n}|_X\ \surra\ \oplus_i \cO_{\PP^n}(\ell_i)|_X, 
$$
whose kernel is $\ker(df) = T_X$. On the locus, $df$ then defines a short exact sequence of bundles
\begin{align}\label{SEES}
0\ \longrightarrow\ \bK^{(i)}\ \longrightarrow\ \bE^{(i)}_1\ \longrightarrow\ (\bP^{(i)})^\vee \ \longrightarrow\ 0,
\end{align}
where
\begin{enumerate}
\item $\bK^{(1)} := \cH^\vee \boxtimes \mathrm{ev}^*T_X$,
\item $\bK^{(2)} := \left(\cH^\vee \boxtimes \mathrm{ev}_1^*T_X\boxtimes \cO_{\bar{M}_{1,1}}\right) \oplus  \left(\cO_{\bar{M}_{1,1}} \boxtimes \mathrm{ev}_2^*T_X\boxtimes \cH^\vee \right)$,
\item $\bK^{(3)} := \cH^\vee \boxtimes \mathrm{ev}_1^*T_X$.
\end{enumerate}
The tautological section of $\bP^{(i)}$ defines a cosection of $(\bP^{(i)})^\vee$. On $\bQ^{(i)}_p(X):=\bQ^{(i)}_p\times_{Q(\PP^n)} Q(X)$, the cosection $\sigma^{(i)}_1$ on $\bE^{(i)}_1$ factors through the pullback of this tautological cosection. Thus, again by the multiplicative property \cite[Theorem 3.2]{Oh18}, we have
\begin{align}\label{aa2}
e^{\KL}\l( \bE_1^{(i)}\r) \cap \l( e^{\KL}(\bE_2^{(i)}) \cap [\bC^{(i)}] \r) = e^{\mathrm{FM}}\l(\bK^{(i)}\r) \cap e^{\KL}\l( (\bP^{(i)} )\dual \r) \cap \l( e^{\KL}(\bE_2^{(i)}) \cap [\bC^{(i)}] \r),
\end{align}
where $e^{\mathrm{FM}}$ denotes the Fulton-MacPherson intersection homomorphism, or Gysin map.

In Sections \ref{DeformCone} and \ref{nomore}, we will explain the second and third equalities below, respectively. The rest equalities and notations are explained after the equations,
\begin{align}\label{eq:ideal2}
[\wtil{Q}_p^{(i)}]\virt &= \frac{1}{\deg(\til{\iota}_{p,i})}(\til{\iota}_{p,i})_* \l( e^{\mathrm{FM}}\l(\bK^{(i)}\r) \cap e^{\KL}\l( (\bP^{(i)} )\dual \r) \cap \l( e^{\KL}\l(\bE_2^{(i)}\r) \cap [\bC^{(i)}] \r) \r) \\ \nonumber
&= \frac{(-1)^{m\cdot i}}{\deg(\til{\iota}_{i})}(\til{\iota}_{i})_* \l( e^{\mathrm{FM}}\l(\bK^{(i)}\r) \cap e^{\KL}\l(  \bE_2^{(i)} \r) \cap \left[\bC^{(i)}|_{\bQ^{(i)}}\right]  \r) \\ \nonumber
&= \frac{(-1)^{m\cdot i}}{\deg(\til{\iota}_{i})}(\til{\iota}_{i})_* \l( e^{\mathrm{FM}}\l(\bK^{(i)}\r) \cap e^{\KL}\l(  \bE_2^{(i)} \r) \cap \left[\bC_{(i)}|_{\bQ^{(i)}}\right]  \r) \\ \nonumber
& = \frac{(-1)^{m\cdot i}}{\deg(\til{\iota}_{i})}(\til{\iota}_{i})_* \l( e^{\mathrm{FM}}\l(\bK^{(i)}\r) \cap \left[ \bC_{(i)}|_{\bQ^{(i)}(X)} \right]\virt \r) \\ \nonumber
& = \frac{(-1)^{m\cdot i}}{\deg(\til{\iota}_{i})}(\til{\iota}_{i})_* \l(e\l( \frac{\bK^{(i)}|_{\bQ^{(i)}(X)} }{\bC_{(i)}|_{\bQ^{(i)}(X)}  } \r) \cap [\bQ^{(i)}(X)]\virt\r).
\end{align}
The first equality is from \eqref{eq:ideal} and \eqref{aa2}. In the fourth equality, the cone $\bC_{(i)}$, the pullback of $C_{(i)}$ in \eqref{eq:coneEmbed}, is a bundle over $\bQ^{(i)}_p$ which is smooth. So its pullback $\bC_{(i)}|_{\bQ^{(i)}}$ is a bundle over $\bQ^{(i)}$. Mimicking Proposition \ref{prop4.1}, we prove $e^{\KL}(\bE_2^{(i)}) \cap [\bC_{(i)}|_{\bQ^{(i)}}]$ is the pullback cycle of
\begin{enumerate}
\item $(-1)^{d(\sum_i \ell_i)}e^{\mathrm{ref}}(V_{1,1,d}) \cap\l([\bar{M}_{1,1}] \times [Q_{1,1,d}^\red(\PP^n)]\r)$,
\item $(-1)^{d(\sum_i \ell_i)+m}e^{\mathrm{ref}}(V_{0,2,d}) \cap\l([\bar{M}_{1,1}] \times [Q_{0,2,d}(\PP^n)] \times [\bar{M}_{1,1}]\r)$,
\item $(-1)^{d(\sum_i \ell_i)}e^{\mathrm{ref}}(V_{0,2,d}) \cap\l([\bar{M}_{1,2}] \times [\PP Q'_{0,2,d}]\r)$\footnote{Here, $\rank V_{0,2,d}$ is $d(\sum_i \ell_i)$ although it is of genus $0$ because $\mathrm{ev}_1=\mathrm{ev}_2.$}.
\end{enumerate}
We denote the pullback cycle in $A_*(\bQ^{(i)})$ by $[\bQ^{(i)}(X)]\virt$ and that in $A_*(C_{(i)}|_{\bQ^{(i)}})$ by $\left[ \bC_{(i)}|_{\bQ^{(i)}(X)} \right]\virt$. The space $\bQ^{(i)}(X):=\bQ^{(i)}\times_{Q(\PP^n)}Q(X)$ is the support. The last equality comes from the fact that $ \bC_{(i)}|_{\bQ_p^{(i)}(X)}$ is contained in $\bK^{(i)}|_{\bQ_p^{(i)}(X)}$ by the cone reduction criterion \cite[Lemma 4.4]{KL13}.

The second equality holds if the cone $\bC^{(i)}$ is isomorphic to the product $\bC^{(i)}|_{\bQ^{(i)}}\times_{\bQ^{(i)}}\bQ^{(i)}_p$ by the property of the tautological bundles and sections,
$$
e^{\KL}\l( (\bP^{(i)})\dual \r)\cap [\bQ_p^{(i)}] = (-1)^{\rank ( \bP^{(i)} ) } e^{\refi}(\bP^{(i)})\cap[\bQ_p^{(i)}] = (-1)^{m \cdot i} [\bQ^{(i)}].
$$ 
We have to be careful when we use the commutativity 
$$
e^{\KL}((\bP^{(i)})\dual) \cap  e^{\KL}(\bE_2^{(i)})\ =\ e^{\KL}(\bE_2^{(i)})\cap e^{\KL}((\bP^{(i)} )\dual)
$$ 
since the sequence \eqref{SEES} is not defined on the entire space $\bQ^{(i)}_p$. But we can use it if $\bC^{(i)}$ is a product. In fact $\bC^{(i)}$ is not a product itself but we deform it to a product. We work this in Section \ref{DeformCone}.

We know $\bC_{(i)}\to\bC^{(i)}$ \eqref{eq:coneEmbed} is almost isomorphic. Then taking twistings by divisors after blowups gives an actual isomorphism which induces the third equality. This work is addressed in Section \ref{nomore}.

After we get \eqref{eq:ideal2}, we prove Theorem \ref{QLP} in Section \ref{sect:Thm2pf}. When $X$ is a Calabi-Yau $3$-fold we prove Theorem \ref{QLP1} in Section \ref{sect:CY}.

\subsection{Deformation of the cone}\label{DeformCone}
Consider the normal cone 
$$
C_{\bC^{(i)} \cap \bE_2^{(i)} / \bC^{(i)}}\ \hookrightarrow\ \bE^{(i)}
$$ 
which is a deformation of $\bC^{(i)}$ via deformation to the normal cone \cite[Chapter 5]{Fu}. A direct computation shows it is also contained in the kernel of the cosection \eqref{COsect}.

\begin{lemm} The cone $C_{\bC^{(i)} \cap \bE_2^{(i)} / \bC^{(i)}}$ has a component of a product
\begin{align}\label{eq:defcone}
\Def(\bC^{(i)}) := \left. 
C_{\bC^{(i)} \cap \bE_2^{(i)} / \bC^{(i)}} \right|_{\bQ_p^{(i)}}\ \cong\ \bC^{(i)}|_{\bQ^{(i)}}\times_{\bQ^{(i)}} \bQ^{(i)}_p.
\end{align}
Other components vanish after taken by $e^{\KL}(\bE^{(i)})$.
\end{lemm}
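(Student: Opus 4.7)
The strategy is to reduce both claims to a direct local calculation in the cut-out model of Section \ref{KuQt}. \'Etale locally on $\bQ_p^{(i)}$, the obstruction bundle $\bE^{(i)}$ trivialises with coordinates $(X_{k*},P_{k*})$, with $\bE_1^{(i)}$ spanned by the $X_{k*}$ and $\bE_2^{(i)}$ by the $P_{k*}$, while $\bC^{(i)} \subset \bE^{(i)}$ is cut out by the bi-homogeneous ideal exhibited in \eqref{CRS}. The intersection $\bC^{(i)} \cap \bE_2^{(i)}$ is obtained by imposing $X_{k*}=0$, and the normal cone $C_{\bC^{(i)}\cap\bE_2^{(i)}/\bC^{(i)}}$ is then the \emph{associated graded} scheme $\Spec \bigoplus_{n\geq 0} J^n/J^{n+1}$ with $J=(X_{k*}) \subset \cO_{\bC^{(i)}}$.

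First I would extract the distinguished component $\Def(\bC^{(i)})$. The key observation is that the defining relations in \eqref{CRS} are bi-homogeneous in the base pair $(x_{k*},p_{k*})$ and the fibre pair $(X_{k*},P_{k*})$; passing to the initial ideal in the $X$-filtration kills the $p_{k*} P_{k*}$ relations and turns the mixed relations $x_{k*}P_{k*}-p_{k*}X_{k*}$ into $x_{k*}P_{k*}=0$. The component then surviving on the whole of $\bQ_p^{(i)}$ is cut out only by the purely $(x_{k*},X_{k*})$-homogeneous relations, and these are precisely the relations defining $\bC^{(i)}|_{\bQ^{(i)}}$. This identifies
\begin{equation*}
\Def(\bC^{(i)}) \;\cong\; \bC^{(i)}|_{\bQ^{(i)}} \times_{\bQ^{(i)}} \bQ_p^{(i)},
\end{equation*}
with the product structure coming from the pullback along the bundle projection $\bQ_p^{(i)}=\bP^{(i)}\to\bQ^{(i)}$. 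Example \ref{EXAM1} is the model case $i=1$: the substitution $X_{1j}=x_{1j}Y$, $P_{1i}=p_{1i}Y$ shows that $\bC^{(1)}$ is a line bundle over $\bQ_p^{(1)}$ whose deformation strips off the $p_{1i}$-twisting.

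The remaining components of $C_{\bC^{(i)}\cap\bE_2^{(i)}/\bC^{(i)}}$ are supported on the loci where some of the $x_{k*}$ vanish; over such loci the Rees algebra acquires extra generators involving the $p_{k*}$-directions. On each such component I would check directly using the explicit formula \eqref{COsect} that the pulled-back cosection of $\bE^{(i)}$ is nowhere zero: when some $x_{k*}=0$ the $X_{kj}$-summand of \eqref{COsect} is forced to vanish modulo the component's relations, so the cosection restricts to $-\deg f_i\cdot f_i(x_k)\cdot P_{ki}$, which is nontrivial away from $Q(X)$; on the locus where $f_i(x_k)$ also vanishes the $X_{kj}$-summand becomes nondegenerate because $d\! f$ is surjective there. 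By the cone reduction criterion \cite[Lemma 4.4]{KL13}, any such component is killed by $e^{\KL}(\bE^{(i)})$.

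The main technical obstacle is the bookkeeping in case $i=2$, where the two independent smoothing parameters $c_1,c_2$ produce a larger set of potential extra components; however the symmetry swapping the two elliptic tails reduces this to two independent copies of the $i=1$ analysis, so the cosection vanishing argument carries over verbatim to each factor.
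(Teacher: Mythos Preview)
Your identification of the product component is on the right track, but the claim that ``passing to the initial ideal in the $X$-filtration kills the $p_{k*}P_{k*}$ relations'' is incorrect: those relations contain no $X$-variable, so they sit in filtration degree zero and survive unchanged in the associated graded. The paper's explicit computation of the normal cone (for $i=1,3$) retains the relation $p_{2k}P_{2l}-p_{2l}P_{2k}$, and the decomposition is into $\{P_2=0\}$ (the product component) and $\{x_2=0\}$ (where the $pP$-relations still constrain $P_2$). Your error is harmless for the good component, since $P_2=0$ makes those relations vacuous, but it would misdescribe the bad one.

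The real gap is in your vanishing argument for the extra components. You claim the cosection \eqref{COsect} is nowhere zero over their support and then invoke \cite[Lemma~4.4]{KL13}. Both steps fail. First, the cone reduction criterion is the statement that the intrinsic normal cone lies in $\ker\sigma$; it does not say that components on which $\sigma$ is surjective are annihilated by $e^{\KL}$, and in any case $C_{\bC^{(i)}\cap\bE_2^{(i)}/\bC^{(i)}}$ is not an intrinsic normal cone to which that lemma applies. Second, the paper checks explicitly (just before the Lemma) that the entire deformed cone, including the extra components, \emph{is} contained in $\ker\sigma$; so the surjectivity you want cannot hold. Concretely, on the component $\{x_2=0\}$ the base has $x_1=x_2=0$, and your sketch does not explain why either summand of \eqref{COsect} should be nonzero there.

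The paper's mechanism is completely different: a dimension count. One applies $e^{\KL}(\bE_1^{(i)})$ alone (using the multiplicative property \cite{Oh18}) to the extra component and observes that the resulting class has degree $\dim B[x,p]-\rank\bE_1^{(i)}$, while its support---the intersection of the component with the degeneracy locus of $\sigma_1^{(i)}$---sits inside a space of strictly smaller dimension. Hence the class vanishes. For $i=2$ there are three bad components rather than one, but the same degree argument disposes of each after splitting $\bE_1^{(2)}$ into its two summands; the $\ZZ/2$-symmetry you invoke is not what makes this work.
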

\begin{proof}
We prove this by using the local coordinate rings in Section \ref{virdecomp} obtained by the cut-out model \eqref{KuModel}. Recall from \eqref{CRS} that locally $C^{(i)}$ is Spec of
\begin{align*}
\frac{R\; [X_{1j},X_{2j},P_{1i},P_{2i}]}{\left(
\begin{array}{c}
x_{1k}X_{1l}-x_{1l}X_{1k},\ x_{1k}P_{1l}-p_{1l}X_{1k},\ p_{1k}P_{1l}-p_{1l}P_{1k}, \\
x_{2k}X_{2l}-x_{2l}X_{2k},\ x_{2k}P_{2l}-p_{2l}X_{2k},\ p_{2k}P_{2l}-p_{2l}P_{2k}
\end{array}
\right),}
\end{align*}
where $R=  B[x,p]\; /(c_1x_{1}, c_2x_{2},c_1p_{1}, c_2p_{2})$ is a local coordinate ring of $\wtil{Q}_p$. 

In a neighborhood of a point in $\wtil{Q}_p^{(1)}$ or $\wtil{Q}_p^{(3)}$, we have seen $c_1=1$ in Sections \ref{diagc} and \ref{desin}, hence $x_1=p_1=0$. Pulling back via the node-identifying morphism, $\bC^{(i)}$ is a component defined by $\{c_2=0\}$ and $\bC^{(i)} \cap \bE_2^{(i)} \subset \bC^{(i)}$ is defined by $\{ X_2 = 0 \}=\{ X_{21} = \dots = X_{2n} = 0 \}$. Introducing a partner variable $X'_2$ of $X_2$, the cone $C_{\bC^{(i)} \cap \bE_2^{(i)} / \bC^{(i)}}$ is Spec of
\begin{align*}
\frac{R/(c_2,x_1,p_1)\; [X_{1j},X'_{2j},P_{1i},P_{2i}]}{\left(
\begin{array}{c}
x_{2k}X'_{2l}-x_{2l}X'_{2k},\ x_{2k}P_{2l},\ p_{2k}P_{2l}-p_{2l}P_{2k}
\end{array}
\right).}
\end{align*}
Then it is the union of $\{x_2=0\}$ and $\{P_2=0\}$. We show the component $\{x_2=0\}$ vanishes by $e^{\KL}(\bE^{(i)})$. To do so it is enough to show that it vanishes by $e^{\KL}(\bE_1^{(i)})$ by \cite[Theorem 3.2]{Oh18}. We show this by degree reason. The cycle $e^{\KL}(\bE_1^{(i)})\cap \{x_2=0\}$ is of degree
$$
\dim B[x,p] - \rank \bE_1^{(i)}\ =\ \dim B[x,p]-n-1.
$$
On the other hand, $e^{\KL}(\bE_1^{(i)})\cap \{x_2=0\}$ is contained in the degeneracy locus of the cosection, a pairinig with $p_2$. It is contained in $R/(c_2,x,p)\; [X_{1},P_{1},P_{2}]$ which has dimension less than or equal to $\dim B[x,p]-n-2.$ Thus $e^{\KL}(\bE_1^{(i)})\cap \{x_2=0\}=0$. The component $\{P_2=0\}$ defines the cone \eqref{eq:defcone}. 

The cone $\bC^{(2)}$ is defined by $\{c_1=c_2=0\}$, and $\bC^{(2)}\cap\bE^{(2)}$ is $\{X_1=X_2=0\}$ in addition. Then it has $4$ components
$$
\{x_1=x_2=0\}\ \cup\ \{x_1=P_2=0\}\ \cup\ \{P_1=x_2=0\}\ \cup\ \{P_1=P_2=0\}.
$$
Similarly we can show the first three will be killed by $e^{\KL}(\bE^{(2)})$ by degree reason. Precisely the first one is killed by $e^{\KL}(\bE_1^{(2)})$, but for the second and third one, we need to decompose $\bE_1^{(2)}$ into two parts and use one for each. The fourth one is the cone \eqref{eq:defcone}.
\end{proof}

\subsection{Local freeness of cones}\label{nomore}
In this section we relate the vector bundle $\bC_{(i)}|_{\bQ^{(i)}}$ and cone $\bC^{(i)}|_{\bQ^{(i)}}$. We suppress the notation $|_{\bQ^{(i)}}$ throughout the Section. Locally this restriction is $p_1=p_2=0$.

Consider the morphism $\bC_{(i)} \to \bE^{(i)}$, pullback of \eqref{eq:coneEmbed}, locally described in \eqref{normal}, and its projection
\begin{align}\label{aa}
\bC_{(i)}\ \longrightarrow\ \bE^{(i)}_1.  
\end{align}
Locally we can check $\bC^{(i)}$ is contained entirely in $\bE^{(i)}_1$. Since $\bC_{(i)}\to \bC^{(i)}$ is isomorphic on a dense open space, the closure of the image of \eqref{aa} is $\bC^{(i)}$. For $i=1,3$, \eqref{aa} vanishes locally on $\{x_1=0\}$ as explained in Section \ref{sect:conesinobs}. Globally this vanishing locus is the pullback of the intersection $\wtil{Q}^{(i)} \cap \wtil{Q}^{\red}$ of components in \eqref{QDEcomp}. Consider the blow-up $b^{(i)} : \what{\bQ}^{(i)} \to \bQ^{(i)}$ along this locus and denote the exceptional divisor by $\bD^{(i)}$. Then the embedding \eqref{aa} pulls back to
$$
(b^{(i)*}\bC_{(i)})(\bD^{(i)})\ \cong\ b^{(i)*}\bC^{(i)}\ \hookrightarrow\ b^{(i)*}\bE^{(i)}_1.
$$

For $i=2$, one blowup on the vanishing locus $\{x_1=0\}\cup\{x_2=0\}$ is not enough since this locus is not smooth. So we take a blowup along the intersection $\{x_1=0\}\cap\{x_2=0\}$ first and then take another blowup along the proper transform of $\{x_1=0\}$ and $\{x_2=0\}$, which are disjoint.\footnote{Another candidate could be a blow up along $\{x_1=0\}$ first and then another blowup along the transform of $\{x_2=0\}$. But we take the one due to its advantage on the computation.} We denote by $b^{(2)}: \what{\bQ}^{(2)} \to \bQ^{(2)}$ the composition of blowup morphisms. Set $\bD^{(2)}_1$ to be the sum of exceptional divisors of the first blowup and the corresponding one of $\{x_1=0\}$ for the second blowup. Similarly we set $\bD^{(2)}_2$ to be the sum of exceptional divisors of the first blowup and the corresponding one of $\{x_2=0\}$ for the second blowup. Recall that $\bC_{(2)}$ is the pullback of the normal bundle $N_{\fM_{1,1} \times \fM_{0,2} \times \fM_{1,1} / \fM_{2,0} }$,
$$
\bC_{(2)}\ \cong\ (\LL_1\dual \otimes \LL\dual_{1} ) \oplus ( \LL_2\dual \otimes \LL_{2}\dual ).
$$ 
Then the embedding \eqref{aa} pulls back to
\begin{align*}
b^{(2)*}\l(\LL_1\dual \otimes \LL_{1}\dual \r)\l(\bD^{(2)}_1\r) \oplus b^{(2)*}\l( \LL_2\dual \otimes \LL_{2}\dual\r)\l(\bD^{(2)}_2\r)\ \cong\ b^{(2)*}\bC^{(2)} \ \hookrightarrow\ b^{(2)*}\bE_1^{(2)}.
\end{align*}

\subsection{Proof of Theorem \ref{QLP}}\label{sect:Thm2pf}
So from the third equality of \eqref{eq:ideal2}, the equalities actually hold on the blowup $\what{\bQ}^{(i)}$ with twistings by the exceptional divisors. Hence \eqref{eq:ideal2} is
$$
[\wtil{Q}_p^{(i)}]\virt \ =\ \frac{(-1)^{m\cdot i}}{\deg(\wtil{\iota}_{i})}(\wtil{\iota}_{i})_* b^{(i)}_* \left( e\left(\frac{  \bK^{(i)}}{\bC_{(i)}(\bD^{(i)})}\right)\cap [ \what{\bQ}^{(i)}(X)]\virt\right),
$$
where $[\what{\bQ}^{(i)}(X)]\virt$ is the cycle, pushing down to $[\bQ^{(i)}(X)]\virt$ via the blowup morphism $b^{(i)}$. For $i=2$, we use
$$
\bC_{(i)}(\bD^{(i)})\ :=\ b^{(2)*}\l(\LL_1\dual \otimes \LL_{1}\dual \r)\l(\bD^{(2)}_1\r) \oplus b^{(2)*}\l( \LL_2\dual \otimes \LL_{2}\dual\r)\l(\bD^{(2)}_2\r)
$$
for notational consistence. Here, we could through away $\bD^{(i)}$ in the denominators by using \cite[Lemma 4.1]{LO20},
\begin{align}\label{eq:chernexpress1}
[\wtil{Q}_p^{(i)}]\virt \ =\ \frac{(-1)^{m\cdot i}}{\deg(\wtil{\iota}_{i})}(\wtil{\iota}_{i})_*  \left( \left[ \frac{ c( \bK^{(i)}) }{c(\bC_{(i)})} \right]_{\star} \cap [\bQ^{(i)}(X)]\virt\right),
\end{align} 
where $\star=\dim X-1$ for $i=1,3$ and $\star=2\dim X-2$ for $i=2$.

\medskip
We compute \eqref{eq:chernexpress1} explicitly to get Theorem \ref{QLP}.
\subsubsection{$i=1$ case} Recall from Section \ref{OUTLINE} that 
\begin{itemize}
\item $\bK^{(1)} = \cH^\vee \boxtimes \mathrm{ev}^*T_X$,
\item $\bC_{(1)} \cong \LL^\vee \boxtimes \LL^\vee$,
\item $[\bQ^{(1)}(X)]\virt = (-1)^{d(\sum_i \ell_i)}e^{\mathrm{ref}}(V_{1,1,d}) \cap\l([\bar{M}_{1,1}] \times [Q_{1,1,d}^\red(\PP^n)]\r)$.
\end{itemize}
Combining these with \cite[Theorem 1.1]{LL22}
\begin{align*}
e^{\mathrm{ref}}(V_{1,1,d}) \cap [Q_{1,1,d}^\red(\PP^n)] \ =\ [Q_{1,1,d}(X)]\virt - [K]_{\dim X-1}\cap 
\l( \, [\bar{M}_{1,1}] \times [Q_{0,2,d}(X)]\virt \, \r),
\end{align*} 
\eqref{eq:chernexpress1} for $i=1$ becomes
\begin{align}\label{FINAL1}
[Q^{(1)}_p]\virt  = & (-1)^{d(\sum_i \ell_i) + m}\, [K]_{\dim X-1} \cap  \left( [\overline{M}_{1,1}] \times [Q_{1,1,d}(X)]\virt \right) \\ \nonumber
& - (-1)^{d(\sum_i \ell_i) + m} \,[K_1]_{\dim X-1}[K_2]_{\dim X-1} \cap \l( \, [\overline{M}_{1,1}] \times  [Q_{0,2,d}(X)]\virt \times [\overline{M}_{1,1}] \, \r),
\end{align}
where, as introduced in Introduction, $K$ denotes the cohomology class $K= \frac{c\,(\cH^\vee \boxtimes \;\mathrm{ev}^* T_X)}{c\,(\LL^\vee \boxtimes\; \LL^\vee)}$.

\subsubsection{$i=2$ case} Recall from Section \ref{OUTLINE} that 
\begin{itemize}
\item $\bK^{(2)} = \left(\cH^\vee \boxtimes \mathrm{ev}_1^*T_X\boxtimes \cO_{\bar{M}_{1,1}}\right) \oplus  \left(\cO_{\bar{M}_{1,1}} \boxtimes \mathrm{ev}_2^*T_X\boxtimes \cH^\vee \right)$,
\item $\bC_{(2)} \cong (\LL_1\dual \otimes \LL\dual_{1} ) \oplus ( \LL_2\dual \otimes \LL_{2}\dual )$,
\item $[\bQ^{(2)}(X)]\virt = (-1)^{d(\sum_i \ell_i)+m}e^{\mathrm{ref}}(V_{0,2,d}) \cap\l([\bar{M}_{1,1}] \times [Q_{0,2,d}(\PP^n)] \times [\bar{M}_{1,1}]\r)$.
\end{itemize}
So for $i=2$, \eqref{eq:chernexpress1} becomes
\begin{align}\label{FINAL2}
[Q^{(2)}_p]\virt  = \frac{(-1)^{d(\sum_i \ell_i) + m}}{2}  \,[K_1K_2]_{2\dim X-2} \cap \l( \, [\overline{M}_{1,1}] \times  [Q_{0,2,d}(X)]\virt \times [\overline{M}_{1,1}] \, \r).
\end{align}

\subsubsection{$i=3$ case} Recall from Section \ref{OUTLINE} that 
\begin{itemize}
\item $\bK^{(3)} = \cH^\vee \boxtimes \mathrm{ev}_1^*T_X$,
\item $\bC_{(3)} \cong \LL^\vee \boxtimes \cO_{\PP Q'_{0,2,d}}(-1)$,
\item $[\bQ^{(3)}(X)]\virt = (-1)^{d(\sum_i \ell_i)}e^{\mathrm{ref}}(V_{0,2,d}) \cap\l([\bar{M}_{1,2}] \times [\PP Q'_{0,2,d}]\r)$
\end{itemize}
where $\cO_{\PP Q'_{0,2,d}}(-1)$ is the tautological line bundle of $\PP Q'_{0,2,d} = \PP(\LL\dual_{1}\oplus \LL\dual_{2})$. 

To compute \eqref{eq:chernexpress1} we first expand $c\,(\bK^{(3)})/c\,(\bC_{(3)})$
\begin{align*}
\frac{c\,(\bK^{(3)})}{1+ c_1(\LL^\vee)+c_1( \cO_{\PP Q'_{0,2,d}}(-1))}\ &=\ c\,(\bK^{(3)})\cdot \sum_{a\geq 0}
\frac{(-1)^a\cdot c_1(\cO_{\PP Q'_{0,2,d}}(-1))^a}{(1+ c_1(\LL^\vee))^{a+1}}\\
&=\ \sum_{a\geq 0}(-1)^a\cdot A^{a+1}\cdot c_1(\cO_{\PP Q'_{0,2,d}}(-1))^a,
\end{align*}
where $A^t=\frac{c\,(\cH^\vee \boxtimes\; \mathrm{ev}_1^*T_X)}{c\,(\LL^\vee\boxtimes\; 1)^{t}}$ as introduced in Introduction. Its $(\dim X-1)$-part $[c\,(\bK^{(3)})/c\,(\bC_{(3)})]_{\dim X-1}$ is 
\begin{align}\label{AA}
\sum_{a\geq 0}^{\dim X-1}(-1)^a\cdot [A^{a+1}]_{\dim X-1-a}\cdot c_1(\cO_{\PP Q'_{0,2,d}}(-1))^a.
\end{align}
By definition of Segre classes \cite[Chapter 3.1]{Fu}, we have
$$
p_* \l(c_1(\cO_{\PP Q'_{0,2,d}}(-1))^a \cap [\PP Q'_{0,2,d}]\r)\ =\ s_{a-1}\l( \LL\dual_{1}\oplus \LL\dual_{2} \r) \cap [Q'_{0,2,d}(\PP^n)]\ =\ [B_1B_2]_{a-1}\cap [Q'_{0,2,d}(\PP^n)]
$$
where $p: \PP Q'_{0,2,d}\to Q'_{0,2,d}(\PP^n)$ is the projection morphism and $B=\frac{1}{c\,(\LL^\vee)}$. So by the projection formula, capping \eqref{AA} with $[\bar{M}_{1,2}] \times [\PP Q'_{0,2,d}]$ and pushing it down to $\bar{M}_{1,2} \times Q'_{0,2,d}(\PP^n)$ becomes
\begin{align}\label{AAA1}
&p_*\l(\left[\frac{c\,(\bK^{(3)})}{c\,(\bC_{(3)})}\right]_{\dim X-1}\cap([\bar{M}_{1,2}] \times [\PP Q'_{0,2,d}])\r)\\ \nonumber
&=\sum_{a\geq 0}^{\dim X-1}(-1)^a\cdot [A^{a+1}]_{\dim X-1-a} [B_1B_2]_{a-1}\cap \l([\bar{M}_{1,2}] \times [Q'_{0,2,d}(\PP^n)]\r).
\end{align}

Next we compute the cycle $e^{\mathrm{ref}}(V_{0,2,d}) \cap\l([\bar{M}_{1,2}] \times [Q'_{0,2,d}(\PP^n)]\r)$ in $\bar{M}_{1,2}\times Q_{0,2,d}(\PP^n)$. Denoting by $j: Q'_{0,2,d}(\PP^n) \hookrightarrow Q_{0,2,d}(\PP^n)$ the embedding and by $\overline{V}_{0,2,d}$ the bundle $\oplus_{i=1}^{m} \pi_* \cL^{\ell_i}$ on $Q_{0,2,d}(\PP^n)$, the evaluation morphism gives rise to a sequence
$$
0\ \longrightarrow\ V_{0,2,d} \ \longrightarrow\ j^*\overline{V}_{0,2,d} \ \xrightarrow{\ \mathrm{ev}_1-\,\mathrm{ev}_2\ }\ \mathrm{ev}_1^*\oplus_{i=1}^{m} \cO(\ell_i) \ \longrightarrow\ 0.
$$
Denoting by $\Delta_{\PP^n}\in H^n(\PP^n \times \PP^n)$ the diagonal class, we have
\begin{align}\label{AAA2}
e^{\mathrm{ref}}(V_{0,2,d}) \cap\l([\bar{M}_{1,2}] \times [Q'_{0,2,d}(\PP^n)]\r)\ &=\ \frac{e^{\mathrm{ref}}(\overline{V}_{0,2,d})}{e(\oplus_{i=1}^{m} \cO(\ell_i))}\cap \l([\bar{M}_{1,2}] \times\l((\mathrm{ev}_1\times\mathrm{ev}_2)^*\Delta_{\PP^n}\cap [Q_{0,2,d}(\PP^n)]\r)\r)  \nonumber \\
&=\ [\bar{M}_{1,2}] \times\l(\frac{(\mathrm{ev}_1\times\mathrm{ev}_2)^*\Delta_{\PP^n}}{e(\oplus_{i=1}^{m} \cO(\ell_i))} \cap [Q_{0,2,d}(X)]\virt\r) \\
&=\ [\bar{M}_{1,2}] \times [Q'_{0,2,d}(X)]\virt \nonumber
\end{align}
where $[Q'_{0,2,d}(X)]\virt$ is the cycle defined in \eqref{Q1'}. Note that $\Delta_{\PP^n}|_X=e(T_{\PP^n}|_X)$ and $\Delta_X|_X=e(T_X)$. Hence by \eqref{AAA1} and \eqref{AAA2}, \eqref{eq:chernexpress1} becomes
\begin{align}\label{FINAL3}
[Q^{(3)}_p]\virt  = (-1)^{d(\sum_i \ell_i) + m} \sum_{a\geq 0}^{\dim X-1}(-1)^a\cdot [A^{a+1}]_{\dim X-1-a} [B_1B_2]_{a-1}\cap \l([\bar{M}_{1,2}] \times [Q'_{0,2,d}(X)]\virt\r).
\end{align}
So \eqref{FINAL1}, \eqref{FINAL2}, \eqref{FINAL3} and \eqref{X=p} prove Theorem \ref{QLP}.

\subsection{Calabi-Yau $3$-folds} \label{sect:CY}
Suppose that $X$ is a Calabi-Yau $3$-fold. Set
$$
\alpha := c_1(\cH^\vee\boxtimes 1), \ \  \beta := c_2(1\boxtimes \mathrm{ev}^*T_X) ,\ \ \psi := c_1(1\boxtimes \LL).
$$
\subsubsection{$i=1$ case}Then since $\alpha = c_1(\LL^\vee \boxtimes 1)$ we have
\begin{align*}
[K]_2\ =\ \left[\frac{c\,(\cH^\vee \boxtimes \mathrm{ev}^*T_{X})}{c\,(\LL^\vee \boxtimes \LL\dual )}\right]_{2} \ = \ \left[ \frac{(1 + 3\alpha + \beta) }{ (1 + \alpha - \psi) } \right]_2.
\end{align*}
Its nontrivial contribution to the integration over $[\overline{M}_{1,1}] \times (e^{\mathrm{ref}}(V_{1,1,d}) \cap [Q_{1,1,d}^\red(\PP^n)])$ is only $-\alpha\psi$. Hence 
$$
[Q^{(1)}_p]\virt\ =\ -\frac{(-1)^{d(\sum_i \ell_i) + m}}{24}\; c_1(\LL)\cap (e^{\mathrm{ref}}(V_{1,1,d}) \cap [Q_{1,1,d}^\red(\PP^n)])
$$
Using \cite[Corollary 1.3]{LL22}
\begin{align*}
e^{\mathrm{ref}}(V_{1,1,d}) \cap [Q_{1,1,d}^\red(\PP^n)] = [Q_{1,1,d}(X)]\virt  - \frac{c\,(\LL)}{12}[Q_{0,2,d}(X)]\virt , 
\end{align*}
we obtain
\begin{align}\label{AAAA1}
[Q^{(1)}_p]\virt\ =\ -\frac{(-1)^{d(\sum_i \ell_i) + m}}{24}\; c_1(\LL)\cap [Q_{1,1,d}(X)]\virt + 2\frac{(-1)^{d(\sum_i \ell_i) + m}}{24^2}c_1(\LL_1)c_1(\LL_2)\cap [Q_{0,2,d}(X)]\virt.
\end{align}

\subsubsection{$i=2$ case} Similarly we have
$$
\left[K_1K_2\right]_{4}  = \left[ \frac{(1 + 3\alpha_1 + \beta_1)}{(1 + \alpha_1 - \psi_1)} \cdot \frac{(1 + 3\alpha_2 + \beta_2)}{(1 + \alpha_2 - \psi_2)} \right]_4.
$$
The nontrivial contribution is $\alpha_1\alpha_2 (-3 \psi_1\psi_2 - 3\beta_1 - 3\beta_2)$. Hence we obtain
\begin{align}\label{AAAA2}
[Q^{(2)}_p]\virt\ &=\ \frac{(-1)^{d(\sum_i \ell_i) + m} }{2} \alpha_1\alpha_2 (-3 \psi_1\psi_2 - 3\beta_1 - 3\beta_2)  \cap \left( [\overline{M}_{1,1}] \times [Q_{0,2,d}(X)]\virt \times [\overline{M}_{1,1}] \right) \\ \nonumber
&=\ -(-1)^{d(\sum_i \ell_i) + m} \frac{3}{2\cdot 24^2} (c_1(\LL_1)c_1(\LL_2)+c_2(\mathrm{ev}_1^*T_X)+c_2(\mathrm{ev}_2^*T_X))  \cap  [Q_{0,2,d}(X)]\virt 
\end{align}

\subsubsection{$i=3$ case}
Since $(\mathrm{ev}_1  \times \mathrm{ev}_2)^*(\Delta_X) \in H^3(Q_{0,2,d}(X))$ and the degree of $[Q_{0,2,d}(X)]\virt$ is $2$, $[Q'_{0,2,d}(X)]\virt=0$. Thus $[Q_p^{(3)}]\virt = 0$.

\medskip

By \eqref{AAAA1}, \eqref{AAAA2} and \eqref{X=p}, we prove Theorem \ref{QLP1}.


\begin{thebibliography}{MOP11}

\bibitem[BCM]{BCM20}
L. Battistella, F. Carocci, and C. Manolache, {\em Reduced invariants from cuspidal maps}, Trans. Amer. Math. Soc. \textbf{373
} (2020), 6713--6756. \href{https://arxiv.org/pdf/1801.07739.pdf}{arXiv:1801.07739}.

\bibitem[BC]{BC} L. Battistella and F. Carocci, {\em A smooth compactification of the space of genus two curves in projective space via logarithmic geometry and Gorenstein curves}, \href{https://arxiv.org/pdf/2008.13506.pdf}{arXiv:2008.13506}.


\bibitem[CGL1]{CGL21}
H.-L. Chang,  S. Guo and J. Li, {\em Polynomial structure of Gromov-Witten potential of quintic 3-folds},  Ann. of Math. \textbf{194} (2021), 585--645. \href{https://arxiv.org/pdf/1809.11058.pdf}{arXiv:1809.11058}

\bibitem[CGL2]{CGL18}
H.-L. Chang,  S. Guo and J. Li, {\em BCOV's Feynman rule of quintic 3-folds}, \href{https://arxiv.org/pdf/1810.00394.pdf}{arXiv:1810.00394}

\bibitem[CGLL]{CGLL21}
H.-L. Chang,  S. Guo, J. Li and W-P. Li, {\em The theory of N-mixed-spin-P fields}, Geom. Topol. \textbf{25} (2021), 775--811. \href{https://arxiv.org/pdf/1809.08806.pdf}{arXiv:1809.08806}

\bibitem[CL1]{CL12}
H.-L. Chang and J. Li, {\em Gromov--witten invariants of stable maps with fields}, Int. Math. Res. Not. \textbf{2012
} (2012), 4163--4217. \href{https://arxiv.org/pdf/1101.0914.pdf}{arXiv:1101.0914}.

\bibitem[CL2]{CL15}
H.-L. Chang and J. Li, {\em An algebraic proof of the hyperplane property of the genus one
  GW-invariants of quintics}, J. Differ. Geom. \textbf{100
  } (2015), 251--299. \href{https://arxiv.org/pdf/1206.5390.pdf}{arXiv:1206.5390}.

\bibitem[CL18]{CL20}
H.-L. Chang and M.-l. Li, {\em Invariants of stable quasimaps with fields}, Trans. Amer. Math. Soc.
 \textbf{373
 } (2020), 3669--3691. \href{https://arxiv.org/pdf/1804.05310.pdf}{arXiv:1804.05310}.
 
\bibitem[CLLL]{CLLL16} 
H.-L. Chang, J. Li, W.-P. Li, C.-C. M. Liu,  {\em An effective theory of GW and FJRW invariants of quintics Calabi-Yau manifolds}, \href{https://arxiv.org/pdf/1603.06184.pdf}{arXiv:1603.06184}.
 
 
\bibitem[CJR19]{CJR21-2} Q. Chen, F. Janda and Y. Ruan, {\em The logarithmic gauged linear sigma model}, Invent. Math. (2021), 1--78. \href{https://arxiv.org/pdf/1906.04345.pdf}{arXiv:1906.04345}.

\bibitem[CJRS]{CJRS18} Q. Chen, F. Janda, Y. Ruan and A. Sauvaget, {\em Towards a Theory of Logarithmic GLSM Moduli Spaces}, to appear in Geom. Topol. \href{https://arxiv.org/pdf/1805.02304.pdf}{arXiv:1805.02304}.
 
\bibitem[CJW]{CJW21}
Q. Chen, F. Janda and R. Webb, {\em Virtual cycles of stable (quasi-) maps with fields}, Adv. Math. {\bf 385} (2021), 107781. \href{https://arxiv.org/pdf/1911.09825.pdf}{arXiv:1911.09825}.


\bibitem[CK1]{CK10}
I. Ciocan-Fontanine and B. Kim, {\em Moduli stacks of stable toric quasimaps}, Adv. Math. \textbf{225
} (2010), 3022--3051. \href{https://arxiv.org/pdf/0908.4446.pdf}{arXiv:0908.4446}.

\bibitem[CK2]{CK20}
I. Ciocan-Fontanine and B. Kim, {\em Quasimap wall-crossings and mirror symmetry}, Publ. Math. IHES \textbf{131
} (2020), 201--260. \href{https://arxiv.org/pdf/1611.05023.pdf}{arXiv:1611.05023}.

\bibitem[CKM]{CKM14}
I. Ciocan-Fontanine, B. Kim, and D. Maulik, {\em Stable quasimaps to GIT quotients}, J. Geom. Phys. {\bf 75} (2014), 17--47. \href{https://arxiv.org/pdf/1106.3724.pdf}{arXiv:1106.3724}.

\bibitem[CJR17]{CJR21-1}
E. Clader, F. Janda, and Y. Ruan, {\em Higher-genus quasimap wall-crossing via localization}, Duke Mathematical Journal \textbf{170} (2021), 697--773. \href{https://arxiv.org/pdf/1702.03427.pdf}{arXiv:1702.03427}.

\bibitem[CM]{CM18}
T. Coates and C. Manolache, {\em A splitting of the virtual class for genus one stable maps}, \href{https://arxiv.org/pdf/1809.04162.pdf}{arXiv:1809.04162}.

\bibitem[CZ]{CZ14} Y. Cooper and A. Zinger, {\em Mirror symmetry for stable quotients invariants}, Michigan Math. J. \textbf{63} (2014), 571--621. \href{https://arxiv.org/pdf/1201.6350.pdf}{arXiv:1201.6350
}.

\bibitem[Co]{Co06} K. Costello, {\em Higher genus Gromov-Witten invariants as genus zero invariants of symmetric products}, Ann. of Math. (2) (2006), 561--601. \href{https://arxiv.org/pdf/math/0303387.pdf}{arXiv:math/0303387}.

\bibitem[FL]{FL19}
H. Fan and Y.-P. Lee, {\em Towards a quantum Lefschetz hyperplane theorem in all genera}, Geom. Topol. \textbf{23} (2019), 493--512. \href{https://arxiv.org/pdf/1712.03573.pdf}{arXiv:1712.03573}.

\bibitem[Fu]{Fu}
W. Fulton, {\em Intersection theory}, $2$nd edition, Springer, Berlin (1998).

\bibitem[Gi]{Gi98}
A. Givental, {\em Elliptic Gromov--Witten invariants and the generalized mirror conjecture}, \href{https://arxiv.org/pdf/alg-geom/9803053.pdf}{arXiv:alg-geom/9803053}.

\bibitem[GP]{GP99}
T. Graber and R. Pandharipande, {\em Localization of virtual classes}, Invent. Math. \textbf{135(2)} (1999), 487--518. \href{https://arxiv.org/pdf/alg-geom/9708001.pdf}{arXiv:alg-geom/9708001}.

\bibitem[GJR1]{GJR17} S. Guo, F. Janda and Y. Ruan, {\em A mirror theorem for genus two Gromov-Witten invariants of quintic threefolds}, \href{https://arxiv.org/pdf/1709.07392.pdf}{arXiv:1709.07392}.


\bibitem[GJR2]{GJR18} S. Guo, F. Janda and Y. Ruan, {\em Structure of Higher Genus Gromov-Witten invariants of Quintic 3-folds}, \href{https://arxiv.org/pdf/1812.11908.pdf}{arXiv:1812.11908}.

\bibitem[HL]{HL10}
Y. Hu and J. Li, {\em Genus-one stable maps, local equations, and Vakil--Zinger's
  desingularization}, Math. Ann. \textbf{348
  } (2010), 929--963. \href{https://arxiv.org/pdf/0812.4286.pdf}{arXiv:0812.4286}.

\bibitem[HLN]{HLN18}
Y. Hu, J. Li, and J. Niu, {\em Genus two stable maps, local equations and modular resolutions}, \href{https://arxiv.org/pdf/1201.2427.pdf}{arXiv:1201.2427}.

\bibitem[KL10]{KL13}
Y.-H. Kiem and J. Li, {\em Localizing virtual cycles by cosections}, J. Amer. Math. Soc. \textbf{26
} (2013), 1025--1050. \href{https://arxiv.org/pdf/1007.3085.pdf}{arXiv:1007.3085}.

\bibitem[KKP]{KKP03}
B. Kim and A. Kresch, and T. Pantev,
{\em Functoriality in intersection theory and a conjecture of Cox, Katz, and Lee}, J. Pure Appl. Algebra. \textbf{179 
} (2003), 127--136. 

\bibitem[KL15]{KL18}
B. Kim and H. Lho.
\newblock {\em Mirror theorem for elliptic quasimap invariants},
\newblock {\em Geom. Topol.} \textbf{22
} (2018), 1459--1481. \href{https://arxiv.org/pdf/1506.03196.pdf}{arXiv:1506.03196 }.

\bibitem[KO]{KO18}
B. Kim and J. Oh, {\em Localized chern characters for 2-periodic complexes}, to appear in Selecta Mathematica (2021). \href{https://arxiv.org/pdf/1804.03774.pdf}{arXiv:1804.03774}.

\bibitem[LL]{LL22}
S. Lee and M.-l. Li, {\em Splitting of the virtual class for genus one stable quasimaps}, \href{https://arxiv.org/pdf/1906.01212.pdf}{arXiv:1906.01212}.

\bibitem[LO1]{LO18}
S. Lee and J. Oh, {\em Algebraic reduced genus one gromov--witten invariants for complete
  intersections in projective spaces}, Int. Math. Res. Not. \textbf{2021} (2021), 18149--18180. \href{https://arxiv.org/pdf/1809.10995.pdf}{arXiv:1809.10995}.


\bibitem[LO2]{LO20}
S. Lee and J. Oh, {\em Algebraic reduced genus one gromov-witten invariants for complete intersections in projective spaces, part 2}, to appear in J. London Math. Soc. (2022). \href{https://arxiv.org/pdf/2004.07436.pdf}{arXiv:2004.07436}.

\bibitem[MOP]{MOP11}
A. Marian, D. Oprea, and R. Pandharipande, {\em The moduli space of stable quotients}, Geom. Topol. \textbf{15
} (2011), 1651--1706. \href{https://arxiv.org/pdf/0904.2992.pdf}{arXiv:0904.2992}.

\bibitem[Oh]{Oh18}
J. Oh, {\em Multiplicative property of localized Chern characters for 2-periodic complexes}. \href{https://arxiv.org/pdf/1808.07663.pdf}{arXiv:1808.07663}.

\bibitem[Pi]{Pi20}
R. Picciotto, {\em Moduli of stable maps with fields}, \href{https://arxiv.org/pdf/2009.04385.pdf}{arXiv:2009.04385}.

\bibitem[Po]{Po13} A. Popa, {\em The genus one Gromov-Witten invariants of Calabi-Yau complete intersections}, Trans. Amer. Math. Soc. \textbf{365} (2013), 1149--1181. \href{https://arxiv.org/pdf/1007.3534.pdf}{arXiv:1007.3534}.

\bibitem[VZ]{VZ08}
R. Vakil and A. Zinger, {\em A desingularization of the main component of the moduli space of genus-one stable maps into $\PP^n$}, Geom. Topol. \textbf{12} (2008), 1--95. \href{https://arxiv.org/pdf/math/0603353.pdf}{arXiv:math/0603353}.

\bibitem[Zh]{Zh22}
Y. Zhou, {\em Quasimap wall-crossing for GIT quotients}, Invent. Math. \textbf{227} (2022), 581--660. \href{https://arxiv.org/pdf/1911.02745.pdf}{arXiv:1911.02745}.

\bibitem[Zi1]{Zi1}
A. Zinger, {\em Reduced genus-one Gromov-Witten invariants}, J. Differ. Geom. \textbf{83
} (2009), 407--460. \href{https://arxiv.org/pdf/math/0507103.pdf}{arXiv:math/0507103}.

\bibitem[Zi2]{Zi2} A. Zinger, {\em The reduced genus 1 Gromov-Witten invariants of Calabi-Yau hypersurfaces}, J. Amer. Math. Soc. \textbf{22} (2009), 691--737. \href{https://arxiv.org/pdf/0705.2397.pdf}{arXiv:0705.2397}.

\bibitem[Zi3]{Zi08} A. Zinger, {\em Standard versus reduced genus-one {G}romov--{W}itten invariants}, Geom. Topol. \textbf{12} (2008), 1203--1241, \href{https://arxiv.org/pdf/0706.0715.pdf}{arXiv:0706.0715}.


\end{thebibliography}
\end{document}